\pdfoutput=1 

\documentclass[a4paper,oneside,reqno,10pt,final]{article}

\usepackage{a4wide}
\usepackage[utf8]{inputenc}
\usepackage[a4paper]{geometry}

\usepackage{titlesec}

\usepackage{bm}

\usepackage{enumitem}
\setlist{itemsep=3pt,parsep=1pt,topsep=3pt,partopsep=1pt}
\setlist[enumerate,1]{label=(\alph*)}
\setlist[enumerate,2]{label=(\roman*)}

\usepackage[usenames,dvipsnames]{xcolor}

\usepackage{url}
\usepackage[colorlinks=true,linkcolor=blue,citecolor=Violet,linktocpage=false]{hyperref}
\usepackage{graphicx}

\usepackage{mathtools}
\mathtoolsset{showonlyrefs}

\usepackage[only,llbracket,rrbracket]{stmaryrd}

\usepackage[T1]{fontenc}

\usepackage[p,osf]{scholax}
\usepackage{amsmath,amsthm}
\usepackage[scaled=1.075,ncf,vvarbb]{newtxmath}

\usepackage{chngcntr}

\usepackage[usenames,dvipsnames]{xcolor}

\swapnumbers

\theoremstyle{plain}
\newtheorem{theorem}{Theorem}[section]
\newtheorem{lemma}[theorem]{Lemma}

\newtheorem*{theorem*}{Theorem}
\newtheorem*{corollary*}{Corollary}

\theoremstyle{definition}

\newtheorem{miniremark}[theorem]{}

\newtheorem*{notation*}{Notation}

\theoremstyle{remark}

\swapnumbers

\newtheoremstyle{newremark}  
{}                           
{}                           
{}                           
{}                           
{}                           
{.}                          
{ }                          
{{\bfseries \thmnumber{#2}}{\itshape \thmname{ #1}}\thmnote{ (#3)}} 

\theoremstyle{newremark}
\newtheorem{remark}[theorem]{Remark}

\newcommand{\adim}{n+1}
\newcommand{\vdim}{n}

\newcommand{\oball}[2]{\mathbf{U}(#1,#2)}

\newcommand{\sphere}[1]{\mathbb{S}^{#1}}

\ifdefined\textint
    \renewcommand{\textint}[2]{{\textstyle\int_{#1}^{#2}}}
\else    
    \newcommand{\textint}[2]{{\textstyle\int_{#1}^{#2}}}
\fi

\ifdefined\textfint
    \renewcommand{\textfint}[2]{{\textstyle\fint_{#1}^{#2}}}
\else    
    \newcommand{\textfint}[2]{{\textstyle\fint_{#1}^{#2}}}
\fi

\ifdefined\textsum
    \renewcommand{\textsum}[2]{{\textstyle\sum_{#1}^{#2}}}
\else  
    \newcommand{\textsum}[2]{{\textstyle\sum_{#1}^{#2}}}
\fi

\ifdefined\textprod
  \renewcommand{\textprod}[2]{{\textstyle\prod_{#1}^{#2}}}
\else  
  \newcommand{\textprod}[2]{{\textstyle\prod_{#1}^{#2}}}
\fi

\newcommand{\R}{\mathbf{R}}

\newcommand{\LM}{\mathscr{L}}

\DeclareMathOperator{\dist}{dist}

\newcommand{\distF}[2]{\textrm{dist}_{\!\!#2}^{#1}}
\newcommand{\da}[2]{\distF{#2}{#1}}

\newcommand{\an}[2]{\boldsymbol{\nu}_{\!#1}^{#2}}

\newcommand{\ar}[2]{\boldsymbol{r}_{\!#1}^{#2}}

\newcommand{\anp}[2]{\boldsymbol{\xi}_{#1}^{#2}}

\newcommand{\pp}{\mathbf{p}}

\newcommand{\HM}{\mathscr{H}}

\newcommand{\restrict}{ \mathop{ \rule[1pt]{.5pt}{6pt} \rule[1pt]{4pt}{0.5pt} }\nolimits }

\newcommand{\ud}{\ensuremath{\,\mathrm{d}}}

\newcommand{\uD}{\ensuremath{\mathrm{D}}}
\newcommand{\Der}{\uD}

\DeclareRobustCommand{\rchi}{{\mathpalette\irchi\relax}}
\newcommand{\irchi}[2]{\raisebox{\depth}{$#1\chi$}}

\newcommand{\id}[1]{\bm{1}_{#1}}

\newcommand{\lIm}{ [ }
\newcommand{\rIm}{ ] }

\newcommand{\VF}{\mathscr{X}}

\newcommand{\Bdry}{\partial}

\newcommand{\redbd}{\partial^{\ast}}

\DeclareMathOperator{\ap}{ap}

\DeclareMathOperator{\lin}{span}

\newcommand{\cnt}[1]{\mathscr{C}^{#1}}

\newcommand{\polar}[1]{{#1}^{\circ}}
\newcommand{\ppolar}[1]{{#1}^{\circ\circ}}

\newcommand{\grad}{\nabla}

\DeclareMathOperator{\spt}{spt}

\DeclareMathOperator{\Tan}{Tan}

\DeclareMathOperator{\Nor}{Nor}

\DeclareMathOperator{\Lip}{Lip}

\DeclareMathOperator{\diam}{diam}

\newcommand{\without}{\mathbin{\raisebox{0.2ex}{$\smallsetminus$}}}

\newcommand{\wulff}{\mathcal{W}}

\newcommand{\perim}[1]{\mathcal{P}_{#1}}

\DeclareMathOperator{\im}{im}

\newcommand{\symdiff}{\mathbin{\triangle}}

\DeclareMathOperator{\Cut}{Cut}

 \allowdisplaybreaks

\author{Mario Santilli}

\title{Bubbling of almost critical points \\
  of anisotropic isoperimetric problems \\
  with degenerating ellipticity}

\begin{document}

\maketitle

\begin{abstract}
Given a sequence of uniformly convex norms $ \phi_h $ on $ \R^{n+1} $ converging  to an arbitrary norm $ \phi $, 	we prove  rigidity of $ L^1 $-accumulation points of sequences of sets $ E_h \subseteq \R^{n+1} $ of finite perimeter, that are volume-constrained almost-critical points of the anisotropic surface energy functionals associated with $ \phi_h $.  Here, almost criticality is measured in terms of the $ L^n $-deviation from being constant of the distributional anisotropic mean $ \phi_h $-curvature of (the varifold associated to) of the reduced boundaries of $ E_h $. We prove that such limits are finite union of disjoint, but possibly mutually tangent, $ \phi $-Wulff shapes.

\end{abstract}
\paragraph*{\small Keywords:}{Wulff shapes, anisotropic isoperimetry, bubbling, mean curvature.}

\section{Introduction}

The Wulff theorem (cf.\ \cite{Dinghas}, \cite{taylor78}, \cite{fonsecawulff}, \cite{fonsecamuellerwulff}, \cite{brothersmorgan}) uniquely characterizes volume-constrained minimizers of anisotropic surface energies. Given a norm $ \phi $ on $ \R^{n+1}  $, one defines the anisotropic surface $ \phi $-energy (or $ \phi $-perimeter) $ \perim{\phi}(E)  $ of a set of finite perimeter $ E \subseteq \R^{n+1} $ by the formula
$$ 	\perim{\phi}(E)= \textint{\redbd E}{} \phi(\an{E}{}(x)) \ud \HM^{\vdim}(x)\,, $$
where $ \redbd E $ is the reduced boundary and  $ \an{E}{} : \partial^\ast E \rightarrow \mathbb{S}^n $ is the measure-theoretic
exterior normal of $ E $, and the Wulff theorem ensures that the compact convex body
$$ \wulff^\phi = \bigcap_{\nu \in \sphere{n}}\bigl\{x \in \R^{n+1} : x \bullet \nu < \phi(\nu)\bigr\}  $$
is the unique minimizers of  the anisotropic surface $ \phi $-energy among all sets of finite perimeter $ E \subseteq \R^{n+1} $ with the same volume of $ \wulff^\phi $. If $ \phi $ is the Euclidean norm, then $ \wulff^\phi $ is the round ball radius $ 1 $, while crystalline norms are those for which $ \wulff^\phi $ is a convex polyhedron. In general, we call the rescaled set $ r \wulff^\phi = \wulff^\phi_r $, \emph{$ \phi $-Wulff shape (or radius $r$)}.

Suppose $ \phi $ is a  uniformly convex $ \cnt{2} $-norm. For every set of finite perimeter $ E \subseteq \R^{n+1} $ the first variation of $ \mathcal{P}_{\phi} $ at $ E $ is a linear functional $ \delta \perim{\phi}(E) : \VF(\R^{n+1}) \rightarrow \R $ (we denote by $ \VF(\R^{n+1}) $ the space of smooth vector fields with compact support), 
and we say that a bounded set $ E $ is a volume-constrained critical point of $ \perim{\phi} $ if $$ \delta \perim{\phi}(E)(g) = 0 $$ for every vector field $ g \in \VF(\R^{n+1}) $ whose associated integral flow $ \varphi_t $ is volume-preserving.   
Volume-constrained critical points of $ \perim{\phi} $ can be equivalently characterized by the requirement that there exists $ \lambda > 0 $ such that 
\begin{equation}\label{intro first variation}
\delta \perim{\phi}(E) (g) = \lambda \textint{\partial^\ast E}{} g(x) \bullet \an{E}{}(x)\, \ud \HM^n(x) \quad \text{for $ g \in \VF(\R^{n+1})\,;$} 
\end{equation} 
	indeed, $ \lambda = \tfrac{n\perim{\phi}(E)}{(n+1)\,|E|} $.  If $ E $ is an open set with smooth boundary and a volume-constrained critical point for $ \perim{\phi} $, then $ \partial E $ has constant mean $ \phi $-curvature (when $ \phi $ is the Euclidean norm then the mean $ \phi $-curvature coincides with the classical mean curvature); cf.\ \ref{curvature smooth hypersurfaces}.  It is proved in \cite{helimage} that  a compact embedded smooth hypersurface with constant mean $ \phi $-curvature is the boundary of a Wulff shape. This result generalizes the celebrated Alexandrov's soap bubble theorem to the anisotropic setting and proves uniqueness of \emph{smooth} critical points of volume-constrained critical points of $ \perim{\phi} $. The sutler problem of proving that arbitrary volume-constrained critical points of $ \perim{\phi} $ are uniquely characterized as finite unions of disjoint and possibly mutually tangent Wulff shapes of the same radius is studied in \cite{delgadinomaggi} when $ \phi $ is the Euclidean norm, and in \cite{deRosaKolaSantilli} for uniformly convex smooth norms.   
	
	It should also be mentioned that in $ \R^2 $ a complete characterization of volume constrained critical points of $ \perim{\phi}  $ for arbitrary norms $ \phi $ is proved by Morgan in \cite{morganwulff}.

Starting with the pioneering works of \cite{struwe84} and \cite{breziscoronRELL}, a lot of effort has been devoted to study  hypersurfaces with almost constant mean curvature. A prominent area of research in this direction is represented by the study of bubbling phenomena for sequences of sets with almost constant mean curvature. Roughly speaking, in the Euclidean setting this problem amounts to understand whether a small deviation (in some $ L^p $-norm) of the Euclidean mean curvature of a smooth set $ E $ from being constant implies proximity of $ E $ to a finite union of disjointed and possibly mutually tangent balls; cf.\  \cite{ciraolomaggi2017},  \cite{JulinNiinikoski}, \cite{poggesibubbling} and \cite{JulinMoriniOronzioSpadaro} for a non exhaustive list of papers on the subject.   While these problems have been extensively investigated in the Euclidean setting, much less appears to be known about the following more general bubbling problem (BP) for arbitrary anisotropies.

\begin{equation}
	\tag{\rm BP}\label{BP}
	\begin{split}
&\mbox{ \it	If $ \phi_h $ are uniformly convex smooth norms in  $\R^{n+1} $ converging  to an arbitrary norm $ \phi $,} \\
&\mbox{\it are finite unions of disjointed and possibly mutually tangent $ \phi $-Wulff shapes} \\
& \mbox{\it  the only $ L^1 $-accumulation points of sequences $ \Omega_h $ of bounded  open sets in $ \R^{n+1} $ }\\ 
&\mbox{\it whose mean $ \phi_h $-curvatures  $ L^p $-converge (for some $ p \geq 1 $) to a constant?}
	\end{split}
\end{equation}

\noindent To author's knowledge, this question is studied for the first in \cite{delgadinomaggimihailaneumayer}, and serves as a weak version of the Alexandrov soap bubble problem for arbitrary anisotropies. In particular, \cite[Theorem 1.4]{delgadinomaggimihailaneumayer}  partially answers \eqref{BP} for sequences of smooth sets $ \Omega_h $ whose (scalar)  mean $ \phi_h $-curvatures $ H^{\phi_h}_{\Omega_h} $ converge in $ L^2 $ to a constant, under the following additional hypothesis:  (i) the mean $ \phi_h $-curvatures of $ \Omega_h $ are uniformly bounded from below by a positive constant (i.e.\ uniform mean convexity), and (ii) the sequence of $ L^2 $-deviations $ \| H^{\phi_h}_{\Omega_h} - \lambda \|_{L^2(\partial \Omega_h)}  $ vanishes faster than the rate of degeneracy of the ellipticity constants of $ \phi_h $. As mentioned in \cite[pag.\ 1141]{delgadinomaggimihailaneumayer}, the fast convergence condition required in (ii) is a key technical assumption for the method used in \cite{delgadinomaggimihailaneumayer}.

In this paper, employing a completely different approach, we fully settle this question for sequences of sets in $ \R^{n+1} $ whose mean $ \phi_h $-curvatures $ L^n $-converge to a constant. (For $ n = 2 $,  that is in $\R^3 $, our result directly generalizes \cite[Theorem 1.4]{delgadinomaggimihailaneumayer}, in particular removing the fast-convergence condition.) Notably, our main result allows for non-smooth approximating sequences  $ \Omega_h $ in \eqref{BP}, more precisely sets of finite perimeter $ \Omega_h $ whose reduced boundary admits a distributional mean $ \phi_h $-curvature in the sense of varifolds; see below for a precise statement. The latter is a distinctive feature of our result, not merely motivated by a quest for generalization, but driven by the concrete fact that it is often not possible to obtain approximating sequences of \emph{smooth} hypersurfaces as solutions of higher-dimensional geometric variational problems (e.g.\ almost-minimizers of elliptic geometric integrands). Under this point of view, our result is new even in the Euclidean setting, cf.\ Remark \ref{rmk compactness 3}, and provides a far-reaching generalization of the main result in \cite{deRosaKolaSantilli} to arbitrary anisotropies.

We state now the main result of this paper, specifying first the hypothesis on the approximating sequence. We consider a sequence of uniformly convex $ \cnt{3} $-norms $ \phi_h $ and a sequence of sets of finite perimeter $ E_h \subseteq \R^{n+1} $  such that
$$ 	\HM^n(\overline{\partial^\ast E_h} \setminus \partial^\ast E_h) =0 \quad \text{for every $ h $} $$ 
and the first variation $ \delta \perim{\phi_h}(E_h) $ is representable by integration in $ L^\infty $, namely 
	$$ \delta \perim{\phi_h}(E_h)(g) = \int_{\partial^\ast E} g(x) \bullet \overrightarrow{H}^{\phi_h}_{E_h}(x)\, d\HM^n(x) \; \text{for $ g \in \VF(\R^{n+1}) $ and $ \overrightarrow{H}^{\phi_h}_{E_h} \in L^\infty(\HM^n \restrict \partial^\ast E_h) $.} $$
Set $H^{\phi_h}_{E_h}= \overrightarrow{H}^{\phi_h}_{E_h} \bullet \an{E_h}{}  $, where $ \an{E_h}{} $ is the exterior unit-normal of $ E_h $.
\begin{theorem}[\protect{cf.\ Theorem \ref{thm compactness}}]
	Suppose the norms  $\phi_h$  pointwise converge to an arbitrary norm $ \phi $ and $ \{E_h\}_{h \geq 1}  $ is a sequence of sets of finite perimeter of $ \R^{n+1} $ as above and such that
	\begin{equation*}
		\sup_{h \geq 1} \bigl( \diam (E_h) + \HM^{\vdim}(\partial^\ast E_h)\bigr) < \infty 
	\end{equation*}
and
\begin{equation*}
		\| H^{\phi_h}_{E_h} - \lambda \|_{L^{\vdim}(\partial^\ast E_h)} \rightarrow 0 \quad \textrm{as $ h \to \infty $.} 
	\end{equation*}
	
	Then the sets $ E_h $  converge in measure to a finite union of disjointed and possibly mutually tangent Wulff shapes. 
\end{theorem}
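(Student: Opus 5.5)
We would prove the theorem by combining a compactness step with a quantitative anisotropic Heintze--Karcher inequality. Equality in that inequality would then be shown to force the limit to be a union of Wulff shapes.

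\emph{Step 1 (compactness).} The bounds on $\diam(E_h)$ and $\HM^n(\partial^\ast E_h)$, after translating, give a subsequence with $E_h \to E$ in $L^1$. Here $E$ is a bounded set of finite perimeter. Since $\phi_h \to \phi$ uniformly on $\sphere{n}$, lower semicontinuity gives $\perim{\phi}(E) \le \liminf_h \perim{\phi_h}(E_h)$.

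Testing the first variation with the position field $g(x)=x$ (cut off outside a large ball) gives the anisotropic Minkowski identity
$$ n\,\perim{\phi_h}(E_h) = \textint{\partial^\ast E_h}{} H^{\phi_h}_{E_h}\, x\bullet \an{E_h}{} \ud\HM^n . $$
Combined with the $L^n$-convergence $H^{\phi_h}_{E_h}\to\lambda$ and Hölder's inequality, this yields
$$ n\,\perim{\phi_h}(E_h) - \lambda (n+1)|E_h| \to 0 . $$

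\emph{Step 2 (Heintze--Karcher for $E_h$).} For sets whose first variation is represented by an $L^\infty$ mean curvature and with $\HM^n(\overline{\partial^\ast E_h}\setminus\partial^\ast E_h)=0$, I would invoke the anisotropic Heintze--Karcher inequality proved earlier in the paper. It has the form
$$ (n+1)|E_h| \le \textint{\partial^\ast E_h}{} \frac{n\,\phi_h(\an{E_h}{})}{H^{\phi_h}_{E_h}} \ud\HM^n . $$
Its proof goes through the normal bundle and the $\phi_h$-distance function: one shows that the anisotropic normal map $\zeta(x,t)=x-t\,\grad\phi_h(\an{}{})$ covers $E_h$ up to null sets, and bounds the Jacobian by an AM--GM argument on the anisotropic principal curvatures. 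The bound uses only the ellipticity and $\cnt{2}$-regularity of $\phi_h$ on fixed compacts, and its constants do not degenerate as $h\to\infty$, because only convexity enters the AM--GM step.

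The quotient $1/H$ is handled by splitting $\partial^\ast E_h$ into the set where $H^{\phi_h}_{E_h}\ge \lambda-\varepsilon$ and its complement. Chebyshev's inequality with the $L^n$ control makes the complement have $\HM^n$-measure $o(1)$. The covered volume coming from the complement is controlled by the $L^n$ norm of the curvature, since the Jacobian is bounded by $(H/n)^n$ up to the $\phi_h$ factors. This gives
$$ (n+1)|E_h| \le \frac{n}{\lambda}\perim{\phi_h}(E_h) + o(1), $$
so by Step 1 the Heintze--Karcher inequality becomes asymptotically an equality.

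\emph{Step 3 (rigidity of the limit).} This is where I expect the main obstacle. Asymptotic equality means that the complement in $E_h$ of the union of the $\phi_h$-Wulff balls of radius $n/\lambda$ that are anisotropically tangent from inside at points of $\partial^\ast E_h$ has vanishing volume. More precisely, $E_h$ is covered, up to $o(1)$ volume, by the images of the normal map for $t\in[0,n/\lambda]$, and the Jacobian is nearly $(1-t\lambda/n)^n$. I would pass this to the limit through the cut points. The set of centers $c=x-\tfrac{n}{\lambda}\grad\phi_h(\an{}{}(x))$ of the Wulff balls $c+\tfrac n\lambda\wulff^{\phi_h}$ should concentrate, in the Hausdorff sense, on a set $C$. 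For this limit set, $E=\bigcup_{c\in C}\bigl(c+\tfrac n\lambda\wulff^{\phi}\bigr)$ up to null sets. The identity $n\perim{\phi}(E)\ge \lambda(n+1)|E|$ then gives equality in the Wulff inequality for each component, which forces $C$ to be finite with disjoint interiors. Because $\phi$ is an arbitrary norm, $\grad\phi$ may be undefined, so this argument has to be done at the level of the $\phi_h$ before taking limits: the Wulff balls must converge in Hausdorff distance, $\tfrac n\lambda\wulff^{\phi_h}\to \tfrac n\lambda\wulff^\phi$.

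The expected difficulty is excluding that a positive-measure portion of $E$ comes from centers spread along a continuum. I would treat it as follows. A union of translated Wulff shapes of radius $r$ indexed by a set $C$ whose perimeter equals $\tfrac{(n+1)\lambda}{n}$ times its volume must have $C$ finite, because any non-isolated center makes the union strictly larger than a disjoint union, while its perimeter, which is bounded by $\liminf_h\perim{\phi_h}(E_h)$, stays comparable. This follows from the strict inequality $\perim{\phi}(A)>\tfrac{n+1}{r}|A|$ for unions $A$ of overlapping Wulff shapes of radius $r$, which in turn follows from the Wulff theorem applied to each connected piece.

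The resulting Wulff shapes are pairwise disjoint with possibly touching boundaries, which is exactly the conclusion of the theorem.
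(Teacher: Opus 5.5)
\textbf{Steps 1 and 2 are sound; Step 3 has a structural gap.} Your first two steps correspond to the first half of the paper's Lemma \ref{lemma estimates}: the Minkowski identity, and the vanishing of the Heintze--Karcher deficit, with constants depending on $\gamma_{\phi_h}$, $\gamma^\circ_{\phi_h}$, $R$, $\lambda$ and the area bound but not on ellipticity.

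The inequality you rely on in Step 3, $\perim{\phi}(A)>\frac{n+1}{r}|A|$ for a union $A$ of overlapping Wulff shapes of radius $r$, is false; it goes the other way. Take two unit discs in $\R^2$ whose centres are at distance $2\cos\theta$, $0<\theta<\pi/2$. Then $\HM^1(\partial A)=4(\pi-\theta)$ and $|A|=2\pi-2\theta+\sin 2\theta$, so $\HM^1(\partial A)<2|A|$. Heuristically, every boundary point is touched from inside by a Wulff shape of radius $r$ lying in $A$, and this inflates volume relative to perimeter.

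The Wulff inequality cannot give your bound either. On a component of volume $V>|\wulff^\phi_r|$ it only gives $\perim{\phi}\geq(n+1)|\wulff^\phi_1|^{1/(n+1)}V^{n/(n+1)}$, and the right-hand side is smaller than $\frac{n+1}{r}V$.

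Nor is the reverse inequality $n\,\perim{\phi}(E)\geq\lambda(n+1)|E|$ available. Lower semicontinuity gives only $n\,\perim{\phi}(E)\leq\lim_h n\,\perim{\phi_h}(E_h)=\lambda(n+1)|E|$, and perimeter can genuinely drop in the limit; avoiding a perimeter-preservation assumption is the point of Remark \ref{rmk compactness 3}. Two overlapping round balls of radius $n/\lambda$ satisfy $n\,\HM^n(\partial E)<\lambda(n+1)|E|$. They are therefore compatible with everything you carry to the limit about $|E|$ and $\perim{\phi}(E)$, so no argument based on these two quantities alone can give rigidity.

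\textbf{What is missing.} You need a quantity that passes to the limit with equality and detects overlaps. The paper uses the eroded sets and their Minkowski sums with small Wulff shapes.
\begin{enumerate}
\item By Arzel\`a--Ascoli, $\delta^{\phi_h}_{\Omega_h}\to\delta^\phi_P$ uniformly. Hence $\mathcal{E}^\phi_{\geq r}(P)$ and $\mathcal{E}^\phi_{\geq r}(P)+\wulff^\phi_s$ are sandwiched between the corresponding sets for $\Omega_h$ with parameters $r\pm\epsilon$, $s\pm\epsilon$.
\item Lemma \ref{lemma estimates} computes the volumes of those sets up to errors $C\|H^{\phi_h}_{\Omega_h}-\lambda\|_{L^n}^{1/n}$. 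Its delicate ingredient is \eqref{lemma estimates basic estimate 15}: for $r<\overline r$, all but a vanishing part of the $\phi_h$-area of $\partial\Omega_h$ has inner reach $\tau\geq r$.
\item Letting $h\to\infty$ and then $r\to\overline r$ gives $|\mathcal{E}^\phi_{\geq\overline r}(P)+\wulff^\phi_s|=\frac{|P|}{\overline r^{n+1}}s^{n+1}$ for $0<s<\overline r$.
\item A packing argument then shows that $C=\mathcal{E}^\phi_{\geq\overline r}(P)$ is finite with ${\rm card}(C)\,|\wulff^\phi_{\overline r}|=|P|$. This forces the shapes $c+{\rm int}\,\wulff^\phi_{\overline r}$ to be pairwise disjoint.
\end{enumerate}

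Point 2 is also what should replace your unproved claim that the inner tangent Wulff shapes of radius $n/\lambda$ cover $E_h$. Asymptotic equality in Heintze--Karcher gives only $\tau\leq n/H^{\phi_h}_{\Omega_h}$, with near-equality in an integrated sense, and those shapes need not lie in $E_h$.

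The set $C$ in point 4 is the rigorous version of your ``limit of centres''. The identity in point 3 is exactly what rules out overlaps. For two overlapping round balls it fails: there $C$ is the pair of centres, while $|P|<2|\wulff^\phi_{\overline r}|$.
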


While \cite[Theorem 1.4]{delgadinomaggimihailaneumayer} is based on a potential theoretic proof done in the spirit of \cite{ros87ibero}, here we follow a completely different route based on an integral-geometric approach that generalizes the methods employed in \cite{montielros},  \cite{delgadinomaggi}, \cite{HugSantilli} and \cite{JulinNiinikoski}. In particular, the key estimates contained in Lemma \ref{lemma estimates} provide a subtle geometric-measure theoretic generalization of \cite[Proposition 3.3]{JulinNiinikoski} to a singular anisotropic setting. Indeed, a major challenge in this lemma arises from the fact that these estimates are proved for finite perimeter sets, rather than $\cnt{2} $-regular sets. This means that the euclidean differential-geometric approach employed in \cite{JulinNiinikoski} cannot be used, and needs to be carefully redesigned in the anisotropic non-smooth setting of Lemma \ref{lemma estimates}, that is for varifolds with bounded distributional anisotropic mean curvature.  This is a particularly delicate issue, since the structural theory needed to treat such varifolds is much weaker than its Euclidean counterpart (cf.\ \cite{Allard1986}, \cite{KolSan25} and \cite{KolSan26}). 

Finally, we mention that question \eqref{BP} is naturally related with the study of anisotropic volume preserving flows. For convex initial data and arbitrary anisotropies $ \phi $, the existence of solutions and convergence to $ \phi $-Wulff shapes was obtained in \cite{BellettiniCasellesChambolleNovaga}. For certain non-convex initial data, the existence of solutions is studied in \cite{KimKwonPovzar}, while the question of convergence of the flow to $ \phi $-Wulff shapes remains open (cf.\ \cite[Remark 1.1.3]{KimKwonPovzar}). In view of the successful applications of bubbling theorems to euclidean volume preserving flows (cf.\ \cite{JulinNiinikoski}), a result like Theorem \ref{thm compactness} might be beneficial in the general anisotropic setting. 

\section{Preliminaries}

\subsection*{Basic background}
If $ A , B $ are subsets of a vector space we write $ A + B = \{a + b : a \in A, \; b \in B\} $.  If $ A \subseteq \R^{n+1} $ we denote its $ (n+1) $-dimensional Lebesgue measure by $ | A | $. If $ T \subseteq \R^{n+1} \times \R^{n+1} $ and $ A \subseteq \R^{n+1} $ we define 
$$ T| A =  \{(a, u) \in T : a \in A\}. $$

\begin{miniremark}\label{basic wulff shapes}
    Suppose $ \phi $ is a norm on $ \R^{\adim} $. The \emph{dual} (or \emph{polar}) norm
    $ \polar{\phi} $ is defined as
    \begin{displaymath}
        \polar{\phi}(u) = \sup \bigl\{
        u \bullet v : v \in \R^{\adim}, \; \phi(v) = 1
        \bigr\} = \sup \bigl\{ u \bullet v : v \in \R^{\adim}, \; \phi(v) \leq 1 \bigr\}
    \end{displaymath}
for $ u \in \R^{\adim} $,    and the \emph{$ \phi $-Wulff shape} (of radius $ r $) is the compact and convex set defined as
    \begin{displaymath}
        \wulff^\phi_r
        = \R^{\adim} \cap \bigl\{
        x  : \polar{\phi}(x)  \leq r
        \bigr\}
        = r \wulff^\phi_1, \quad \wulff^\phi_1 = \wulff^\phi. 
    \end{displaymath}
    We set $ \wulff^\phi_0 = \{0\} $. Note that $ \ppolar{\phi} = \phi $ for every norm $ \phi $. Notice that the \emph{Fenchel inequality} follows from the definition of $ \phi^\circ $: 
    $$ u \bullet v \leq \phi^\circ(u)\, \phi(v) \quad \textrm{for $ u, v \in \R^{n+1} $.} $$
    Moreover, it follows from the $ 1 $-homegeneity of $ \phi $ that 
    \begin{equation}\label{eq basic wulff shapes 1}
    	\textrm{$ \phi $ is differentiable at $ u $ if and only if $ \phi $ is differentiable at $ t u $}
    \end{equation}
    whenever $ u \in \R^{n+1}  \setminus \{0\} $ and $ t > 0 $, in which case $ \nabla \phi^\circ(tu) = \nabla \phi^\circ(u) $.
    
    If $ K = \{u : \phi(u) \leq 1\} $ we notice that $ \phi^\circ $ is the support function of $ K $ (cf.\ \cite{Schneiderbook}). From \cite[Corollary 1.7.3 and Theorem 1.7.4]{Schneiderbook} we see that $ \phi^\circ $ is differentiable at a point $ u \in \R^{n+1} \setminus \{0\} $ if and only if  there exists a unique $ v \in K $ so that $ v \bullet u = \phi^\circ(u) $, in which case $ v = \nabla \phi^\circ(u) $ and 
    $$ \Nor(\wulff^\phi, u) = \{t\, \nabla \phi^\circ(u) : t \geq 0\}. $$
    Employing the Fenchel inequality we infer that 
    \begin{equation}\label{eq basic wulff shapes}
    	 \phi(\nabla \phi^\circ(tu)) = \phi(\nabla \phi^\circ (u)) = 1 \quad \textrm{and} \quad u \bullet \nabla \phi^\circ (u)=1
    \end{equation}
    whenever $ u \in \partial \wulff^\phi $  is a point of differentiability of $ \phi^\circ $ and $ t > 0 $. 
     Since by Rademacher theorem $ \phi^\circ $ is differentiable at $ \LM^{n+1} $ a.e.\ $ x \in \R^{n+1} $, we can use \eqref{eq basic wulff shapes 1} to see that $ \phi^\circ $ is differentiable at $ \HM^n $ a.e.\ $ u \in \partial \wulff^\phi $.
\end{miniremark} 

\begin{miniremark}
	Suppose $ \phi $ is a norm on $ \R^{\adim} $. For every $(\HM^{\vdim}, n)$ rectifiable and
	$ \HM^{\vdim} $-measurable subset $ \Sigma \subseteq \R^{\adim} $ we define the
	\emph{$ \phi $-anisotropic area} of $ \Sigma $ by
	\begin{displaymath}
		\mathcal{A}_\phi(\Sigma) = \textint{\Sigma}{} \phi(\nu(x))\ud \HM^{\vdim}(x)  \,,
	\end{displaymath}
	where $ \nu $ is an $ \HM^{\vdim} \restrict \Sigma $-measurable $\sphere{\vdim}$~valued function
	such that $ \nu(x) \in \Nor^{\vdim}(\HM^{\vdim} \restrict \Sigma, x) $ for $ \HM^{\vdim} $ a.e.\
	$ x \in \Sigma $.
	
	If $ E \subseteq \R^{\adim} $ is a set of finite perimeter, we define the \emph{$ \phi $-perimeter} of $ E $ by
	\begin{displaymath}
		\perim{\phi}(E) = \mathcal{A}_\phi(\redbd E) = \textint{\redbd E}{} \phi(\an{E}{}(x)) \ud \HM^{\vdim}(x) \,,
	\end{displaymath}
	where we denote by $ \redbd E $  the reduced boundary and by $ \an{E}{} : \partial^\ast E \rightarrow \mathbb{S}^n $ the measure-theoretic
	exterior normal of $ E $; cf.~\cite[Definition 3.4]{AFP} noting that we employ a different notation with respect to \cite{AFP}.
	
	It follows from \ref{basic wulff shapes} and  the divergence theorem for  sets of finite perimeter  that $$ \an{\wulff^\phi}{}(u) = \frac{\nabla \phi^\circ (u)}{| \nabla \phi^\circ (u) |} \quad \textrm{for $ \HM^n  a.e.\  u \in \partial \wulff^\phi $} $$
and 
\begin{flalign}\label{eq basic wulff shapes 2}
	(n+1)\bigl| \wulff^\phi \bigr| & = \int_{\partial \wulff^\phi} u \bullet \an{\wulff^\phi}{}(u)\, d\HM^n(u) = \int_{\partial \wulff^\phi} \frac{1}{| \nabla \phi^\circ (u) |}\, d\HM^n(u) \notag \\
	& = \int_{\partial \wulff^\phi} \frac{\phi\bigl(\nabla \phi^\circ (u)\bigr)}{| \nabla \phi^\circ (u) |} = \int_{\partial \wulff^\phi} \phi\bigl( \an{\wulff^\phi}{}(u) \bigr)\, d\HM^n(u) = \mathcal{P}_\phi\bigl(\wulff^\phi\bigr).
\end{flalign}
\end{miniremark}

\begin{miniremark}
 Suppose $ \phi $ is a norm on $ \R^{n+1} $.   Given a set $ A \subseteq \R^{\adim} $ we consider the \emph{$ \phi $-distance function}
    \begin{displaymath}
        \da{A}{\phi}(x) = \inf\bigl\{ \polar{\phi}(x-a) : a \in A \bigr\}
        \quad \textrm{for $ x \in \R^{\adim} $} 
    \end{displaymath}
    and the \emph{$ \phi $-nearest point projection}
    \begin{displaymath}
        \anp{A}{\phi} = \overline{A} \cap \bigl\{ a : \polar{\phi}(x-a) = \da{A}{\phi}(x) \bigr\}
        \quad \textrm{for $ x \in \R^{\adim} $}  \,.
    \end{displaymath}

    We notice by triangle inequality that \emph{if
      $ (a, \eta) \in \overline{A} \times \Bdry \wulff^\phi $ and $ t > 0 $ so that
      $ \da{A}{\phi}(a + t \eta) = t $, then $ \da{A}{\phi}(a+s\eta) = s $ whenever
      $ 0 \leq s \leq t $.}  Henceforth, we define the \emph{$ \phi $-unit normal bundle}
    \begin{displaymath}
        N^\phi(A) = \overline{A} \times \Bdry \wulff^\phi \cap \bigl\{
        (a,\eta) : \da{A}{\phi}(a+s\eta) = s \;
        \textrm{for some $ s > 0 $}
        \bigr\}, 
    \end{displaymath}
    the \emph{$ \phi $-reach function} $ \ar{A}{\phi} : N^\phi(A) \rightarrow (0, +\infty] $ by 
    \begin{displaymath}
       r^\phi_A(a, \eta) = \sup \bigl\{ s : \da{A}{\phi}(a + s \eta) = s \}
        \quad \textrm{for $(a, \eta) \in N^\phi(A) $} 
    \end{displaymath}
    and the \emph{$ \phi $-cut locus} of $ A $ by
    \begin{displaymath}
        \Cut^\phi(A) = \bigl\{ a + \ar{A}{\phi}(a, \eta) \eta
        : (a, \eta) \in N^\phi(A), \; \ar{A}{\phi}(a, \eta) < \infty
        \bigr\} \,. 
    \end{displaymath}
\end{miniremark}

\emph{In what follows, if $ \phi $ is the Euclidean norm, then we omit the dependence on $ \phi $ in all
the symbols introduced above.}

\begin{miniremark}\label{distance flow}
    We say that a norm $ \phi $ on $ \R^{\adim} $ is \emph{strictly convex} if for all $ v, w \in \R^{\adim} $ holds that 
    \begin{displaymath}
        \phi(v+w) = \phi(v) + \phi(w) \quad \implies \quad \phi(v) w = \phi(w) v 
    \end{displaymath}
    (or equivalently that $ \phi(u + v) < \phi(u) + \phi(v) $ whenever $ u, v $ are linearly independent).

    Let $ A \subseteq \R^{\adim} $. Strict convexity implies that \emph{if
      $ (a, \eta) \in \overline{A} \times \Bdry \wulff^\phi $ and $ t > 0 $ so that
      $ \da{A}{\phi}(a + t \eta) = t $, then $ \xi^\phi_A(a+s\eta) = \{a\} $ whenever
      $ 0 < s < t $.}  Henceforth, we define the \emph{$ \phi $-distance flow}
    \begin{displaymath}
        F^\phi_A
        : \Gamma^\phi(A) \rightarrow \R^{\adim} \without \overline{A}
    \end{displaymath} by $ F^\phi_A(a, \eta, t) = a +t\eta $ for $(a,\eta, t) \in \Gamma^\phi(A) $,
    where
    \begin{displaymath}
        \Gamma^\phi(A) = \{(a, \eta, t) \in N^\phi(A) \times \R : 0 < t < \ar{A}{\phi}(a, \eta)\} 
    \end{displaymath}
    and we deduce that $ F^\phi_A $ is injective. Moreover, it follows from definitions that
    \begin{displaymath}
        \im F^\phi_A = \R^{\adim} \without (\overline{A} \cup \Cut^\phi(A)). 
    \end{displaymath} 
\end{miniremark}

\begin{miniremark}\label{uniformly ocnvex norms}
    We say that a norm $ \phi $ on $ \R^{\adim} $ is a \emph{$ \cnt{k} $-norm} if
    $ \phi | \R^{\adim} \without \{0\} $ is $ C^k $-regular. If $ \phi $ is a
    $ \cnt{1} $-norm, then it follows from the $ 1 $-homogeneity of $ \phi $ that
    \begin{displaymath}
        \grad \phi(v) \bullet v = \phi(v) \quad \textrm{whenever $ v \in \R^{\adim} \without \{0\} $} 
    \end{displaymath}
    and $ \grad \phi(tv) = \grad \phi(v) $ whenever $ t > 0 $ and
    $ v \in \R^{\adim} \without \{0\} $. In particular, for $ v \in \R^{\adim} $,
    \begin{displaymath}
        \im \Der \grad \phi(v) \subseteq \lin\{v\}^\perp. 
    \end{displaymath}
Moreover, $ \nabla \phi(v) = - \nabla \phi(-v) $ for $ v \in \R^{n+1} \setminus \{0\} $. If $ \phi $ is
a~$\cnt{1} $-norm and $ E \subseteq \R^{n+1} $ is a set of finite perimeter we define its \emph{exterior $\phi$-normal} by
\begin{displaymath}
	\an{E}{\phi} = \grad \phi \circ \an{E}{}.  
\end{displaymath}

    A $ \cnt{2} $-norm $ \phi $ on $ \R^{\adim} $ is \emph{uniformly convex} if there
    exists a constant $ \gamma(\phi) > 0 $ such that
    \begin{displaymath}
	\Der^2 \phi(u)(v,v) \ge \gamma(\phi) |v|^2
	\quad \text{for $ u \in \sphere{n}$ and $ v \in \lin \{ u \}^{\perp} $} \,.
    \end{displaymath}

    Notice that if $ \phi $ is uniformly convex then $ \polar{\phi} $ is also
    a~uniformly convex $ \cnt{2} $-norm. Moreover, $ \Bdry \wulff^\phi $ is a $ \cnt{2} $-hypersurface and we denote by
    $ \bm{n}^\phi : \Bdry \wulff^\phi \rightarrow \sphere{n} $ the exterior unit normal,
    \begin{displaymath}
        \bm{n}^\phi(x) = \frac{\grad \polar{\phi} (x)}{| \grad \polar{\phi}(x) |} \quad \textrm{for $ x \in \Bdry \wulff^\phi $.} 
    \end{displaymath}
    We recall that $ \grad \phi [\R^{\adim} \without \{0\}] = \Bdry \wulff^\phi $
    and $ \grad \phi | \sphere{n} $ is the inverse of $ \bm{n}^\phi $, cf.\ \cite[Lemma
    2.32]{deRosaKolaSantilli}. Notice that $ \bm{n}^\phi(-x) = - \bm{n}^\phi(x) $ for $ x \in \partial \wulff^\phi $.

    If $ A \subseteq \R^{\adim} $ and $ \phi $ is a uniformly convex norm then, cf.~\cite[Remark 5.10]{deRosaKolaSantilli},
    \begin{equation}\label{cut locus}
    	 | \Cut^\phi(A)| =0.
    \end{equation}
    
    We often employ that map
    $ \Phi : \R^{\adim} \times \sphere{\vdim} \rightarrow \R^{\adim} \times \Bdry \wulff^\phi $ given by
    \begin{displaymath}
    	\Phi(a,u) = (a, \grad\phi(u)) \quad \textrm{for $(a,u) \in  \R^{\adim} \times \sphere{\vdim} $}. 
    \end{displaymath}
    This is  a $ \cnt{1} $-diffeomorphism. Moreover, $
   	\Phi \lIm N(A) \rIm = N^\phi(A) $ and $ N^\phi(A) $ is a Borel
   	and countably $ \vdim $-rectifiable subset of $ \overline{A} \times \Bdry \wulff^\phi $;
   	cf.~\cite[Lemma~5.2]{deRosaKolaSantilli}. 
   
\end{miniremark}
    
    \begin{miniremark}\label{curvature smooth hypersurfaces}
Suppose $ \phi $ is a uniformly convex $ \cnt{2} $-norm,    $ \Sigma \subseteq \R^{\adim} $ is a $ \cnt{2} $-hypersurface
    	and $ \nu : \Sigma \rightarrow \sphere{\vdim} $ is a unit-normal $ \cnt{1} $-vector field. Then the linear map
    	$$ \Der (\grad \phi \circ \nu)(a) : \Tan(\Sigma, a) \rightarrow \Tan(\Sigma,a) $$ is
    	diagonalizable (cf.~\cite[Remark~3.13]{HugSantilli}). Moreover, $ N(\Sigma)| \Sigma $ is a $ n $-dimensional
    	$ \cnt{1} $-submanifold of $ \R^{\adim} \times \sphere{\vdim} $ and  $ N^\phi(\Sigma)| \Sigma = \Phi(N(\Sigma)|\Sigma) $ (cf.\ \ref{uniformly ocnvex norms}). In particular, we infer that  \emph{$N^\phi(\Sigma)|\Sigma $ is a $ \vdim $-dimensional $ \cnt{1} $-submanifold of $ \R^{\adim} \times \Bdry \wulff^\phi $.}
    	
    	If $ \Sigma \subseteq \R^{\adim} $ is a $ \cnt{2} $-hypersurface,
    	$(a, \eta) \in N^\phi(\Sigma)| \Sigma $ and $ \nu $ is a unit-normal $ \cnt{1} $-vector field
    	defined in a neighbourhood of $ a $ such that $ \grad \phi(\nu(a)) = \eta $, then we define the
    	\emph{principal $ \phi $-curvatures} of $ \Sigma $ at $ a $ in the direction $ \eta $,
    	\begin{displaymath}
    		\kappa^\phi_{\Sigma, 1}(a, \eta) \leq \ldots \leq \kappa^\phi_{\Sigma, n}(a, \eta),
    	\end{displaymath}
    	to be the eigenvalues of $ \Der (\grad \phi \circ \nu)(a) $. Moreover, if $ a \in \Sigma $ we define the mean $ \phi $-curvature of $ \Sigma $ at $ a $ 
    	$$ \overrightarrow{H}^\phi_\Sigma(a) = \sum_{i=1}^n \kappa^\phi_{\Sigma,i}(a, \nabla \phi(u))\, u $$
    	whenever $ u \in N(\Sigma, a) $. Since for every $ a \in \Sigma $ there exists $ u \in \mathbf{S}^n $ such that $ N(\Sigma,a) = \{u, -u\} $, this definition does not depend on the choice of $ u $, since $ \nabla \phi(u) = - \nabla \phi(-u) $ for every $ u \in \mathbf{S}^n $ and  $ \kappa^\phi_{\Sigma,i}(a, \nabla \phi(u)) = - \kappa^\phi_{\Sigma,i}(a, - \nabla \phi(u)) $.
    \end{miniremark}

\subsection*{First variation of the anisotropic perimeter}

\begin{miniremark}
	Suppose $ \phi $ is a norm on $ \R^{\adim} $. For every $(\HM^{\vdim}, n)$ rectifiable and
	$ \HM^{\vdim} $-measurable subset $ \Sigma \subseteq \R^{\adim} $ we define the
	\emph{$ \phi $-anisotropic area} of $ \Sigma $ by
	\begin{displaymath}
		\mathcal{A}_\phi(\Sigma) = \textint{\Sigma}{} \phi(\nu(x))\ud \HM^{\vdim}(x)  \,,
	\end{displaymath}
	where $ \nu $ is an $ \HM^{\vdim} \restrict \Sigma $-measurable $\sphere{\vdim}$~valued function
	such that $ \nu(x) \in \Nor^{\vdim}(\HM^{\vdim} \restrict \Sigma, x) $ for $ \HM^{\vdim} $ a.e.\
	$ x \in \Sigma $.
	
	If $ E \subseteq \R^{\adim} $ is a set of finite perimeter, we define the \emph{$ \phi $-perimeter} of $ E $ by
	\begin{displaymath}
		\perim{\phi}(E) = \mathcal{A}_\phi(\redbd E) = \textint{\redbd E}{} \phi(\an{E}{}(x)) \ud \HM^{\vdim}(x) \,,
	\end{displaymath}
	where we denote by $ \redbd E $  the reduced boundary and by $ \an{E}{} : \partial^\ast E \rightarrow \mathbb{S}^n $ the measure-theoretic
	exterior normal of $ E $; cf.~\cite[Definition 3.4]{AFP} and notice that our notation differs from \cite{AFP}.  If $ \phi $ is
	a~$\cnt{1} $-norm we define its \emph{$\phi$-normal} by
	\begin{displaymath}
		\an{E}{\phi} = \grad \phi \circ \an{E}{}  
	\end{displaymath}
and we notice that $ \bm{n}^\phi\big(\pm\an{\Omega}{\phi}(a)\big) = \pm \an{\Omega}{}(a) $ for $ a \in \partial^\ast E $.
\end{miniremark}
	
	\begin{miniremark}
		Let $ \VF(\R^{n+1})$  be the space of $ \cnt{1} $-vector field with compact support in $ \R^{n+1} $.
		
Suppose $ \phi $ is a $ \cnt{1} $-norm.	If $ E \subseteq \R^{n+1} $ is a set of
	finite perimeter and $ g \in \VF(\R^{n+1}) $, then \emph{the
		first variation of $ \mathcal{P}_{\phi} $ at $ E $ in the direction $ g $} is defined by
	\begin{displaymath}
		\delta \mathcal{P}_{\phi}(E)(g) = \left. \tfrac{d}{dt} \mathcal{P}_{\phi}(\varphi_t \lIm E \rIm) \right|_{t=0} \,,
	\end{displaymath}
	where $ \varphi_t(x) = x + t g(x) $ for $ (x, t) \in \R^{n+1} \times \R $. We denote by
	$ \| \delta \mathcal{P}_{\phi}(E) \| $ the total variation measure associated with
	$\delta \mathcal{P}_{\phi}(E) $. 
	
For $ \nu \in \sphere{\vdim} $, we define
	$ B_\phi(\nu) \in {\rm Hom}\bigl(\R^{\adim},\R^{\adim}\bigr) $ by the formula
	\begin{displaymath}
		\langle v, B_\phi(\nu) \rangle
		= \phi(\nu) v - \langle v, \Der \phi(\nu) \rangle \, \nu
		\quad \textrm{for $ v \in \R^{\adim} $.}
	\end{displaymath}  
	Notice that $ B_\phi(-\nu) = B_\phi(\nu) $ for each $ u \in \sphere{\vdim} $. It is known (cf.\ \cite[Appendix A]{DePhilippis2018}) that 
	\begin{equation}\label{anisotropic divergence formula}
		\delta \mathcal{P}_\phi(E)(g) = \int_{\partial^\ast E} \Der g(x) \bullet B_\phi(\an{E}{}(x))\, d\HM^n(x) \quad \textrm{for $ g \in \VF(\mathbf{R}^{n+1}) $.} 
	\end{equation} 
	 If $ E \subseteq \R^{\adim}$ is a bounded set of finite perimeter and
	$ X(x) = x $ for $ x \in \R^{\adim} $, then
	\begin{equation}
		\label{anisotropic first variation and anisotorpic perimeter}
		\delta\mathcal{P}_\phi(E)(X) = n \perim{\phi}(E) \,.
	\end{equation}
	Indeed, choosing an orthonormal basis $ e_1, \ldots , e_{n+1} $ of~$ \R^{\adim} $ and noting
	that $ \grad \phi(v) \bullet v = \phi(v) $ for $ v \in \R^{\adim} \without \{0\} $, we compute
	\begin{displaymath}
		B_\phi(\nu) \bullet \id{\R^{\adim}}
		= \textsum{i=1}{n+1} \langle e_i, B_\phi(\nu)\rangle \bullet e_i
		\\
		= (n+1) \phi(\nu) - \langle \nu, \Der \phi(\nu) \rangle = n \phi(\nu)
	\end{displaymath}
	for each $ \nu \in \sphere{\vdim} $, whence \eqref{anisotropic first variation and anisotorpic perimeter} follows from \eqref{anisotropic divergence formula}.
\end{miniremark}

\begin{miniremark}
	Suppose $ \phi $ is a $ \cnt{1} $-norm.	Notice that if $ E \subseteq \R^{n+1} $ is a set of finite perimeter satisfying 
	$$  \| \delta \mathcal{P}_{\phi}(E) \| \leq \kappa \, \bigl( \HM^{\vdim} \restrict \redbd E \bigr) \quad \textrm{for some $ 0 \leq \kappa < \infty $,} $$
	then by Riesz representation theorem and Lebesgue differentiation theorem (cf.\ \cite[2.5.12, 2.9]{Federer1969}) we infer the existence of  a function $ \overrightarrow{H}_E^\phi\in L^\infty(\HM^n \restrict \partial^\ast E, \mathbf{R}^{n+1}) $ such that
	$$ \delta \mathcal{P}_\phi(E)(g) = \int_{\partial^\ast E} g(x) \bullet \overrightarrow{H}^\phi_E(x)\, d\HM^n(x) \quad \textrm{for $ g \in \VF(\R^{n+1}) $.} $$
	The following result summarizes the main properties of a set of finite perimeter $ E $ as above.
	
	\begin{theorem}\label{regularity theorem}
		Suppose $ \phi $ is a uniformly convex $ \cnt{3} $-norm and $ E \subseteq \R^{n+1} $ is a set of finite perimeter such that
 \begin{equation}\label{regularity theorem hp1}
 	\HM^n(\overline{\partial^\ast E} \setminus \partial^\ast E) =0 
 \end{equation}
 and  there exists $ 0 < \kappa_E < \infty $ with 
 \begin{equation}\label{regularity theorem hp2}
 	 \| \delta \mathcal{P}_{\phi}(E) \| \leq \kappa_E \, \bigl( \HM^{\vdim} \restrict \redbd E \bigr). 
 \end{equation}
		
		Then there exists an open subset $ \Omega \subseteq \R^{n+1} $ and a $ \cnt{1, \alpha} $-hypersurface $ M \subseteq \redbd \Omega $ such that the following statements hold.
		\begin{enumerate}
			\item\label{regularity theorem 1} $ \LM^{\adim}(\Omega \without E) = \LM^{\adim}(E \without \Omega) =0 $, $ \HM^n(\Bdry \Omega \setminus \redbd \Omega) =0 $, $ \spt (\HM^n \restrict \redbd \Omega) = \Bdry \Omega $.
			\item\label{regularity theorem 2}	$ \HM^{\vdim}(\Bdry \Omega \without M) = 0$
			\item\label{regularity theorem 5}  There exists a countable family of $ \cnt{2} $-hypersurfaces of $ \R^{n+1}  $ that covers $ \HM^n $ almost all $ \partial \Omega $.
			\item $ N(\partial \Omega,a) = \{\pm \an{\Omega}{}(a)\} $ for $ \HM^n $ a.e.\ $  a\in \partial^\ast \Omega $.
			\item\label{regularity theorem 3}  If $ Z \subseteq \partial \Omega $ and $ \HM^n(Z) =0 $, then $ \HM^n(N^\phi(\partial \Omega)|Z) =0 $.
			\item\label{regularity theorem 4} If $ \Sigma \subseteq \R^{n+1} $ is a  $ \cnt{2} $ hypersurface, then  $$ \overrightarrow{H}^\phi_\Sigma(a) = \overrightarrow{H}^\phi_\Omega(a) = \overrightarrow{H}^\phi_E(a) \quad \text{$ \HM^n $ a.e.\ $ a \in \Sigma \cap \partial \Omega $.} $$
		\end{enumerate}
	\end{theorem}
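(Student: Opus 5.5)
Since \(E\) is a set of finite perimeter, the natural choice for \(\Omega\) is the normalized representative: the set of points of density \(1\) of \(E\), more precisely \(\Omega = \{x : |B(x,r)\setminus E| = 0 \text{ for some } r>0\}\). This set is open, coincides with \(E\) up to null sets, and satisfies \(\spt(\HM^n\restrict\redbd\Omega) = \partial\Omega = \overline{\redbd E}\) (cf.\ the standard normalization in \cite{AFP}). Since \(\redbd\Omega=\redbd E\), hypothesis \eqref{regularity theorem hp1} gives \(\HM^n(\partial\Omega\setminus\redbd\Omega)=0\), which is (a). Moreover \(\delta\perim{\phi}(\Omega)=\delta\perim{\phi}(E)\), so \(\overrightarrow{H}^\phi_\Omega=\overrightarrow{H}^\phi_E\), which gives the second equality in (f).

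For (b), I plan to view \(V=\mathbf{v}(\redbd E,1)\) as a rectifiable varifold whose anisotropic first variation with respect to the \(\cnt{3}\) uniformly convex integrand \(\phi\) is bounded by \(\kappa_E\HM^n\restrict\redbd E\). By the anisotropic Allard-type regularity theorem for integrands of this kind (Allard \cite{Allard1986}, in the codimension-one form for boundaries of finite perimeter sets), the density-one set is relatively open in \(\spt\|V\|\) and is a \(\cnt{1,\alpha}\) hypersurface \(M\subseteq\redbd\Omega\). Since \(\redbd E\) has density one \(\HM^n\) a.e., this yields \(\HM^n(\partial\Omega\setminus M)=0\).

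For (c), I would invoke the \(\cnt{2}\)-rectifiability of varifolds with bounded anisotropic first variation (the recent results \cite{KolSan25,KolSan26}, which extend Menne's Euclidean theorem). Alternatively, on \(M\) one can use elliptic regularity: the Euler--Lagrange equation \(\operatorname{div}(\nabla\phi(\nu))=H\in L^\infty\) with \(\cnt{1,\alpha}\) data gives \(W^{2,p}\) graphs for all \(p<\infty\). Sobolev functions are covered \(\HM^n\)-a.e.\ by countably many \(\cnt{2}\) graphs, and this second route seems the cleaner one. Item (d) follows because at points of \(M\) the set \(\partial\Omega\) is locally a \(\cnt{1}\) graph separating \(\Omega\) from its complement, so the Euclidean normal cone is exactly \(\{\pm\an{\Omega}{}\}\).

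Items (e) and (f) use the \(W^{2,p}\) structure. For (f), fix a \(\cnt{2}\) hypersurface \(\Sigma\). At \(\HM^n\)-a.e.\ point of \(\Sigma\cap M\), the \(W^{2,p}\) graph describing \(\partial\Omega\) and \(\Sigma\) agree to second order in the approximate sense, by the locality of approximate differentials on the coincidence set. Combined with the pointwise a.e.\ validity of the divergence-form Euler--Lagrange equation, this identifies \(\overrightarrow{H}^\phi_\Omega\) with the classical \(\overrightarrow{H}^\phi_\Sigma\) computed as the trace of \(\Der(\grad\phi\circ\nu)\).

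(e) is the main obstacle. It is a Lusin (N)-type property for the normal bundle, and it is delicate because \(N^\phi(\partial\Omega)|Z\) may contain singular normal directions at non-\(\cnt{1}\) points. For \(Z\subseteq M\), I would first try to write \(N(\partial\Omega)|M\subseteq\{(a,\pm\an{\Omega}{}(a))\}\) and use that \(\an{\Omega}{}\) is \(W^{1,p}\) with \(p>n\), hence satisfies Lusin (N) from \(n\)-dimensional sets. Then \(\Phi\) transfers the null set from \(N\) to \(N^\phi\). For \(Z\subseteq\partial\Omega\setminus M\), this bound is not available. There I would use the Heintze--Karcher-type estimates for the anisotropic distance flow (as in \cite{deRosaKolaSantilli}), together with the first variation bound, to control \(\HM^n(N^\phi(\partial\Omega)|Z)\) by \(\HM^n(Z)\); this is where the bounded anisotropic mean curvature is essential.
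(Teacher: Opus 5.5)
\textbf{The gap is (e) over the singular set.} Most of your plan is a valid alternative to the paper, which proves this theorem only by citation: \cite[Corollary 1.3]{KolSan25} for (a)--(d), \cite[Lemma 4.11, Theorem 4.10 and Lemma 5.4]{deRosaKolaSantilli} for (e), and \cite[Theorem 1.1]{KolSan26} for (f). Your route is to get $M$ from Allard's anisotropic regularity theorem and then use $W^{2,p}$ regularity of the $\cnt{1,\alpha}$ graphs. This works for all $p<\infty$, because $\phi$ is $\cnt{3}$ and the divergence-form equation has an $L^\infty$ right-hand side. It does give (c), (d), (f), and also (e) for $Z\subseteq M$, via Lusin (N) for $W^{1,p}$ maps with $p>n$.

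Two small corrections:
\begin{itemize}
\item Your open representative $\{x:|B(x,r)\setminus E|=0 \text{ for some } r>0\}$ agrees with $E$ a.e.\ and has boundary $\overline{\redbd E}$ only because \eqref{regularity theorem hp1} forces $|\overline{\redbd E}|=0$. Without that hypothesis this set can be empty although $|E|>0$.
\item Since there is no monotonicity formula, define $M$ as the open set of points where the smallness hypotheses of Allard's theorem hold at some scale, not as the density-one set. This set contains $\redbd E$ by De Giorgi's theorem.
\end{itemize}

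For $Z\subseteq\partial\Omega\setminus M$ you leave (e) as a plan. Knowing $\HM^n(\partial\Omega\setminus M)=0$ says nothing about $N^\phi(\partial\Omega)|(\partial\Omega\setminus M)$. For general closed sets, an $\HM^n$-null set of points (edges, vertices) can carry normals of positive $\HM^n$-measure. Ruling this out is a statement about the varifold at points where you have no graph description.

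The mechanism you suggest cannot supply this, for three reasons.
\begin{itemize}
\item It is circular. Heintze--Karcher-type estimates such as Lemma \ref{lemma estimates} rest on the disintegration formula of Lemma \ref{disintegration} applied to $K=\R^{n+1}\setminus\Omega$. That needs $\HM^n\bigl(N^\phi(K)\setminus\widetilde{N}^\phi_n(K)\bigr)=0$ and the ability to discard $N^\phi(K)|(\partial\Omega\setminus\redbd\Omega)$. In Theorem \ref{thm locality normal bundle} both facts are derived from (e) itself.
\item Such estimates are upper bounds for volumes. Nothing in them forces the contribution of normals over a null set to vanish.
\item A linear bound $\HM^n(N^\phi(\partial\Omega)|Z)\le C\,\HM^n(Z)$ is false in general, even for $Z\subseteq M$. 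There the Gauss map is only $W^{1,p}$ and its Jacobian need not be bounded; only the qualitative null-to-null property holds.
\end{itemize}

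What is missing is the Lusin (N)-type theorem for the normal bundle of $\spt\|V\|$, for the unit-density varifold $V$ with $\|\delta_\phi V\|\le\kappa_E\|V\|$, transferred from $N$ to $N^\phi$ via $\Phi$. The paper takes this from \cite[Lemma 4.11, Theorem 4.10 and Lemma 5.4]{deRosaKolaSantilli}. That result covers every $\HM^n$-null $Z\subseteq\partial\Omega$ at once, so once you have it, splitting (e) into $Z\subseteq M$ and $Z\subseteq\partial\Omega\setminus M$ is unnecessary.
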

\begin{proof}
	For $(a)$-$(d) $, see \cite[Corollary 1.3]{KolSan25}. Since the unit-density $ n $-dimensional varifold $ V $ associated with $ \partial \Omega $ satisfies $ \| \delta_\phi V \| \leq \kappa_E \| V \| $, we infer $(e)$ from \cite[Lemma 4.11, Theorem 4.10 and Lemma 5.4]{deRosaKolaSantilli}. Moreover, noting that we also have that $ \HM^n \restrict \spt \| V \| $ is absolutely continuous with respect to $ \| V \| $, we deduce $(f)$ from \cite[Theorem 1.1]{KolSan26}.
\end{proof}
\begin{remark}\label{rmk scalar mean curvature}
Combining \ref{regularity theorem 5} and \ref{regularity theorem 4} we infer that $$ \overrightarrow{H}^\phi_E(a) = \overrightarrow{H}^\phi_\Omega(a) \in \Nor^n(\HM^n \restrict \partial \Omega, a) \quad \textrm{for $  \HM^n $ a.e.\ $ a \in \partial \Omega $.} $$ 
Whenever $ E $ is a set of finite perimeter satisfying the hypothesis of Theorem \ref{regularity theorem} we define the \emph{scalar mean $ \phi $-curvature of $ E $} 
$$  H^\phi_E= \overrightarrow{H}^\phi_E \bullet \an{E}{} $$
and we notice that $ \overrightarrow{H}^\phi_E(a) = H^\phi_E(a)\, \an{E}{}(a) $ for $ \HM^n $ a.e.\ $ a \in \partial^\ast E $.
\end{remark}
	\end{miniremark}

\section{Anisotropic Curvatures of closed sets}\label{sec: anisotorpic curvatures} 
In this section we assume that $ \phi $ is a uniformly convex $ \cnt{2} $-norm on $ \R^{\adim} $. We employ the \emph{$ \phi $-principal curvatures} of a set $ A $, for which we refer to \cite[Section 3]{HugSantilli} for details. They are Borel functions $
        \kappa^\phi_{A,i} : \widetilde{N}^\phi(A) \rightarrow (-\infty, +\infty] $ 
    defined on a Borel subset $ \widetilde{N}^\phi(A) $ of $ N^\phi(A) $, such that
    \begin{displaymath}
    \HM^{\vdim}\bigl(N^\phi(A) \without \widetilde{N}^\phi(A)\bigr) =0
    \end{displaymath}
and 
\begin{displaymath}
	\kappa_{A,1}^\phi(a, \eta) \leq \ldots \leq \kappa^\phi_{A,n}(a, \eta)
\quad \text{for $(a, \eta) \in\widetilde{N}^\phi(A) $} \,.
\end{displaymath}
The principal $ \phi $-curvatures naturally appear in the following two results.

\begin{lemma}[\protect{cf.\, \cite[Lemma 3.9]{HugSantilli}}]\label{principal frame field}
	There exists maps $ \tau_1, \ldots, \tau_n : \widetilde{N}^\phi(A) \rightarrow \mathbf{S}^n $ such that, if we define $ \zeta_1, \ldots, \zeta_n : \widetilde{N}^\phi(A) \rightarrow \R^{n+1} \times \R^{n+1} $ by
	\[
	\zeta_i(a,\eta) = \begin{cases}
	\bigl(\tau_i(a, \eta), \kappa^\phi_{A,i}(a, \eta)\tau_i(a, \eta)\bigr) & \textrm{if $ \kappa^\phi_{A,i}(a, \eta) < \infty $}\\
	\bigl(0, \tau_i(a, \eta)\bigr) & \textrm{if $ \kappa^\phi_{A,i}(a, \eta) = \infty $}\\
	\end{cases}
	\]
	for $ i = 1, \ldots, n $, then 
	$$ \Tan^n(\HM^n \restrict W, (a, \eta)) = \lin\bigl\{\zeta_i(a, \eta) : i = 1, \ldots, n\bigr\} $$
	for every  $ \HM^n $-measurable $ W \subseteq N^\phi(A) $ satisfying $ \HM^n(W) < \infty $.
\end{lemma}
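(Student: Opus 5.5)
The plan is to reduce to the Euclidean case through the diffeomorphism $\Phi$ of \ref{uniformly ocnvex norms}, using the Euclidean structure theory of the unit normal bundle (the classical frame-field description of $\Tan^n(\HM^n\restrict N(A),\cdot)$ for arbitrary closed sets), and then to transport the Euclidean frame to the anisotropic one. The key observation is that $\Phi(a,u)=(a,\grad\phi(u))$ maps $N(A)$ onto $N^\phi(A)$, so tangent cones of $\HM^n\restrict W$ at $\HM^n$ almost every point are images under $\Der\Phi$ of the tangent cones of $\HM^n\restrict\Phi^{-1}[W]$. Since $\Phi$ is a $\cnt{1}$-diffeomorphism, $\HM^n(\Phi^{-1}[W])<\infty$ whenever $\HM^n(W)<\infty$. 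Moreover, approximate tangent spaces of rectifiable sets transform under $\Der\Phi$ at $\HM^n$ a.e.\ point, by the area formula and the fact that $\Phi^{-1}[W]$ is a Borel subset of the countably $n$-rectifiable set $N(A)$.

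First, I will recall that for $\HM^n$ a.e.\ $(a,u)\in N(A)$ there are orthonormal vectors $\tau_1,\ldots,\tau_n\perp u$ and Euclidean principal curvatures $k_i\in(-\infty,\infty]$ such that the approximate tangent space of $N(A)$ is spanned by $(\tau_i,k_i\tau_i)$, with the convention $(0,\tau_i)$ when $k_i=\infty$. This follows by writing $N(A)$ as a countable union of images of normal bundles of the sets $\{\dist_A\ge r\}$, which have positive reach, and applying Federer's theory there. Second, I will apply $\Der\Phi(a,u)(v,w)=(v,\Der\grad\phi(u)w)$ to obtain the span of $(\tau_i,k_i\Der\grad\phi(u)\tau_i)$. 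These vectors are not of the required eigen-form, so the third step is to re-diagonalize. I will use that $\Der\grad\phi(u)$ is symmetric positive definite on $u^\perp$ by uniform convexity, and that the Euclidean second fundamental form $L=\sum k_i\tau_i\otimes\tau_i$ is symmetric. Then the $\phi$-shape operator $\Der\grad\phi(u)\circ L$ is self-adjoint with respect to the inner product $\langle \Der\grad\phi(u)^{-1}\cdot,\cdot\rangle$, and hence diagonalizable with real eigenvalues $\kappa^\phi_{A,i}$ and eigenvectors $\tau_i'$ spanning $u^\perp$. This is the same mechanism as in \ref{curvature smooth hypersurfaces}, via \cite[Remark 3.13]{HugSantilli}.

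The main obstacle is the case of infinite curvatures. When some $k_i=\infty$, the tangent space contains vertical directions $(0,\cdot)$, and $L$ is a symmetric operator defined only on the subspace $T_0=\lin\{\tau_i:k_i<\infty\}$. To handle this, I will pass to the inverse description: the tangent space is the graph-like relation $\{(v,\Der\grad\phi(u)w)\}$ with $(v,w)$ in the Euclidean relation $\{(\sum\alpha_i\tau_i,\sum\beta_i\tau_i)\}$, which is a Lagrangian-type symmetric relation. I would then diagonalize the symmetric pencil given by $\Der\grad\phi(u)^{-1}$ against the relation, equivalently treat the operator $v\mapsto w$ with $\infty$ eigenvalues, by working with the bounded symmetric map $w\mapsto v$, which has finite nonnegative-denominator form $1/k_i$. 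This produces a basis $\tau_i$ of $u^\perp$ with eigenvalues in $(-\infty,\infty]$. The resulting $\kappa^\phi_{A,i}$ and $\tau_i$ must be checked to agree with the Borel definitions in \cite[Section 3]{HugSantilli}, and I expect them to do so by construction. Normalizing $\tau_i$ to lie in $\mathbf S^n$ and ordering the eigenvalues yields the statement on $\widetilde N^\phi(A)$, whose complement in $N^\phi(A)$ is $\HM^n$-null. Finally, independence from $W$ follows because approximate tangent spaces of $\HM^n\restrict W$ coincide with those of $\HM^n\restrict N^\phi(A)$ at $\HM^n$ a.e.\ point of $W$, by the density theorem.
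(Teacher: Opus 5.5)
The paper does not prove this lemma. It quotes \cite[Lemma 3.9]{HugSantilli}, where $\widetilde N^\phi(A)$ and the functions $\kappa^\phi_{A,i}$ are already defined, so what must be shown is that \emph{those} functions are the eigenvalues of the approximate tangent planes of $N^\phi(A)$. Your route, pushing the Euclidean frame forward by $\Der\Phi$ and re-diagonalising, is legitimate as far as it goes: at $\HM^n$ a.e.\ point it produces unit vectors and numbers in $(-\infty,\infty]$ whose $\zeta_i$ span $\Tan^n(\HM^n\restrict W,(a,\eta))$.

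\textbf{The gap.} The genuine gap is the step you defer with ``I expect them to do so by construction''. Nothing in the Euclidean transport touches the definition of $\kappa^\phi_{A,i}$. The properties the paper actually uses, \cite[Remark 3.8]{HugSantilli} and Lemma \ref{disintegration}, concern the anisotropic flow $t\mapsto a+t\eta$ and $r^\phi_A$, not the lines $a+tu$ and $r_A$. Closing the gap needs two things.
\begin{enumerate}
\item Uniqueness. Let $G=\Der\grad\phi(u)|u^\perp$ and $T=\Tan^n(\HM^n\restrict\Phi^{-1}[W],(a,u))$. Then $T$ is Lagrangian for $\omega((v,w),(v',w'))=v\bullet w'-v'\bullet w$, so $T'=\Der\Phi(a,u)[T]$ is Lagrangian for $\omega_G((v,w),(v',w'))=v\bullet G^{-1}w'-v'\bullet G^{-1}w$. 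This forces the $\tau_i$ of \emph{any} representation $T'=\lin\{\zeta_i\}$ to be linearly independent, so the ordered $\kappa_i$ depend only on $T'$. Without this structure uniqueness fails: $\lin\{(e,0),(0,e)\}$ contains $(e,ce)$ for every $c$.
\item An argument that the curvatures defined in \cite{HugSantilli} arise from $T'$ in this way. Given the first point, this is exactly the content of the cited lemma, and the transport does not supply it.
\end{enumerate}
If instead you adopt your construction as the definition, you then owe the bound $\kappa^\phi_{A,i}\geq -1/r^\phi_A$ and Lemma \ref{disintegration} for it. Both require analysing the $\phi$-flow.

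\textbf{Smaller repairs.}
\begin{enumerate}
\item Your sketch of the Euclidean input is not correct as stated. The set $\{x:\dist(x,A)\geq r\}$ need not have positive reach for a given $r$: near a point it can reduce to a sequence of isolated points converging to it. Quote the Euclidean frame theorem for arbitrary closed sets (Hug--Last--Weil, Santilli) rather than rederiving it this way.
\item In the infinite-curvature case, the correspondence $w\mapsto v$ is not single valued as soon as some $k_i=0$. The clean version: let $T_0=\lin\{\tau_i:k_i<\infty\}$, $L$ the symmetric operator on $T_0$, and $P_{T_0}$ the orthogonal projection onto $T_0$. Then $T'=\{(v,G(Lv+z)):v\in T_0,\ z\in T_0^\perp\}$. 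The finite $\phi$-curvatures are the eigenvalues of the pencil $L\tau=\kappa\,P_{T_0}G^{-1}\tau$ on $T_0$, where $P_{T_0}G^{-1}|T_0$ is positive definite. The value $\infty$ corresponds to $\tau\in G[T_0^\perp]$, and $T_0\oplus G[T_0^\perp]=u^\perp$ because $G$ is positive definite. The resulting $\tau_i$ are unit but not orthonormal, which is all the statement requires.
\item $\HM^n\restrict N^\phi(A)$ need not be locally finite. Argue independence of $W$ through the Euclidean statement (valid for every $W$ of finite measure) together with the uniqueness point above, not through tangent planes of $\HM^n\restrict N^\phi(A)$.
\end{enumerate}
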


\begin{remark}
	The maps $ \tau_1, \ldots , \tau_n $ can be chosen to be Borel maps. This can be proved adapting the proof of \cite[Lemma 2.48]{KolSan25}. 
\end{remark}

\begin{lemma}[\protect{cf.\, \cite[Corollary 3.18]{HugSantilli}}]
    \label{disintegration}
    \label{lem:disintegration}
    Suppose $ A \subseteq \R^{\adim} $ is a closed set such that
    $ \HM^{\vdim}\bigl(N^\phi(A) \without \widetilde{N}^\phi_n(A)\bigr) =0 $.

    Then there exists a $ \HM^{\vdim} \restrict N^\phi(A) $-measurable function $ J_A^\phi $ such
    that
    \begin{displaymath}
        J_A^\phi(a, \eta) = (\HM^{\vdim} \restrict W,\vdim) \ap  J_n \pp(a, \eta)
        \quad \textrm{for $\HM^{\vdim} $ a.e.\ $(a, \eta) \in W $} 
    \end{displaymath}
    whenever $ W \subseteq N^\phi(A) $ is a set of finite $ \HM^{\vdim} $-measure (hence $ (\HM^{\vdim},n) $-rectifiable). Moreover, 
    \begin{multline}
        \int_{\R^{\adim} \without A} \varphi \ud \LM^{\adim} \\
        = \int_{N^\phi(A)} \phi(\bm{n}^\phi(\eta))\, J_A^\phi(a, \eta)\, \int_0^{r^\phi_A(a, \eta)} \varphi(a + t \eta) \prod_{i=1}^{\vdim}\bigl( 1 + t \kappa^\phi_{A,i}(a, \eta)\bigr)\ud t\ud \HM^{\vdim}(a, \eta)
    \end{multline}
    whenever $ \varphi : \R^{\adim} \rightarrow \R $ is a non-negative Borel function.
\end{lemma}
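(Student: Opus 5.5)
The plan is to obtain the formula from the area formula applied to the $\phi$-distance flow $F^\phi_A$ of \ref{distance flow}. Since $\phi$ is strictly convex, $F^\phi_A$ is injective on $\Gamma^\phi(A)$. Its image is $\R^{\adim}\without(A\cup\Cut^\phi(A))$, and by \eqref{cut locus} this has full measure in $\R^{\adim}\without A$. Hence
$$\textint{\R^{\adim}\without A}{}\varphi\ud\LM^{\adim}=\textint{\im F^\phi_A}{}\varphi\ud\LM^{\adim}.$$

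\textbf{Structure of $\Gamma^\phi(A)$.} By \ref{uniformly ocnvex norms}, $N^\phi(A)$ is Borel and countably $\vdim$-rectifiable. So I will cover it, up to an $\HM^{\vdim}$-null set, by Borel sets $W_j$ of finite $\HM^{\vdim}$-measure contained in $\widetilde N^\phi_n(A)$, which is possible by hypothesis. Then $\Gamma^\phi(A)\cap(W_j\times\R)$ is $(\HM^{\adim},\adim)$-rectifiable. Its approximate tangent space at $(a,\eta,t)$ is $\Tan^{\vdim}(\HM^{\vdim}\restrict W_j,(a,\eta))\times\R$, and $\HM^{\adim}$ restricted to it coincides with $(\HM^{\vdim}\restrict W_j)\times\LM^1$. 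This is the standard product statement for rectifiable sets, checked via Lipschitz parametrizations.

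\textbf{Jacobian computation.} On this tangent space, $\Der F^\phi_A(a,\eta,t)(u,v,s)=u+tv+s\eta$. Using the frame $\zeta_1,\dots,\zeta_n$ of Lemma \ref{principal frame field}, where all $\kappa^\phi_{A,i}$ are finite a.e.\ on $\widetilde N^\phi_n(A)$, the vectors $(\zeta_i,0)$ are mapped to $(1+t\kappa^\phi_{A,i})\tau_i$ and $(0,0,1)$ is mapped to $\eta$. Therefore
$$\ap J_{\adim}F^\phi_A=\frac{|\tau_1\wedge\cdots\wedge\tau_n\wedge\eta|}{|\zeta_1\wedge\cdots\wedge\zeta_n|}\,\Bigl|\prod_{i=1}^{\vdim}(1+t\kappa^\phi_{A,i})\Bigr|.$$
The factors $1+t\kappa^\phi_{A,i}$ are positive for $0<t<\ar{A}{\phi}(a,\eta)$; this follows from the definition of the reach, as in \cite{HugSantilli}.

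Next I factor $|\tau_1\wedge\cdots\wedge\tau_n\wedge\eta|=|\tau_1\wedge\cdots\wedge\tau_n|\,|\eta\bullet\bm{n}^\phi(\eta)|$. This uses that $\tau_1,\dots,\tau_n$ span $\bm{n}^\phi(\eta)^\perp$, which is the orthogonality of the $\tau_i$ to $\bm{n}^\phi(\eta)=\an{}{}$ built into Lemma \ref{principal frame field}. By \eqref{eq basic wulff shapes} and $\phi(\grad\polar\phi(\eta))=1$, we get $\eta\bullet\bm{n}^\phi(\eta)=1/|\grad\polar\phi(\eta)|=\phi(\bm{n}^\phi(\eta))$.

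\textbf{Defining $J^\phi_A$.} The remaining factor is $|\tau_1\wedge\cdots\wedge\tau_n|/|\zeta_1\wedge\cdots\wedge\zeta_n|$. This is exactly $(\HM^{\vdim}\restrict W,\vdim)\ap J_n\pp(a,\eta)$, the Jacobian of the projection $\pp(a,\eta)=a$ along the tangent plane. By locality of approximate tangent spaces (Lemma \ref{principal frame field} holds for every $W$), these values agree a.e.\ on overlaps of different $W$. So they patch to a single $\HM^{\vdim}\restrict N^\phi(A)$-measurable function $J^\phi_A$, which gives the first assertion.

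\textbf{Conclusion.} The area formula for the injective map $F^\phi_A$ on $\Gamma^\phi(A)\cap(W_j\times\R)$, combined with Fubini for $(\HM^{\vdim}\restrict W_j)\times\LM^1$ and summation over the disjointified $W_j$, yields the displayed identity.

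\textbf{Main obstacle.} I expect the delicate step to be the rigorous measure-theoretic part. This means establishing that $\Gamma^\phi(A)$ is rectifiable with product tangent spaces and product Hausdorff measure. It also means justifying the use of the area formula with approximate Jacobians on a set where $\ar{A}{\phi}$ is merely Borel, and where the frame of Lemma \ref{principal frame field} is only known to be measurable.
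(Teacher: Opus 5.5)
The paper gives no proof of this lemma; it is quoted from Hug--Santilli (Corollary~3.18), so there is no internal argument to compare against. Your derivation is correct, and it is the natural one given Lemma~\ref{principal frame field}. The area formula for the injective map $F^\phi_A$ on $\Gamma^\phi(A)\cap(W_j\times\R)$, together with $|\Cut^\phi(A)|=0$, yields the identity. Your Jacobian
\[
\ap J_{\adim}F^\phi_A(a,\eta,t)=\phi\bigl(\bm{n}^\phi(\eta)\bigr)\,\ap J_{\vdim}\pp(a,\eta)\,\prod_{i=1}^{\vdim}\bigl(1+t\,\kappa^\phi_{A,i}(a,\eta)\bigr)
\]
is right, including the identity $\eta\bullet\bm{n}^\phi(\eta)=1/|\grad\polar{\phi}(\eta)|=\phi(\bm{n}^\phi(\eta))$.

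The measure theory you single out as the main obstacle is routine. For $s>0$ the set $\{(a,\eta)\in N^\phi(A):\ar{A}{\phi}(a,\eta)\geq s\}$ equals $\{(a,\eta)\in N^\phi(A):\da{A}{\phi}(a+s\eta)=s\}$, which is relatively closed. Hence $\ar{A}{\phi}$ is upper semicontinuous and $\Gamma^\phi(A)$ is Borel. After truncating $t$ to bounded intervals $I$, Federer 3.2.23 gives $\HM^{\adim}\restrict(W_j\times I)=(\HM^{\vdim}\restrict W_j)\times(\LM^1\restrict I)$, with product approximate tangent spaces.

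One step has a justification that is not actually available: the orthogonality $\tau_i(a,\eta)\bullet\bm{n}^\phi(\eta)=0$. Lemma~\ref{principal frame field}, as quoted in the paper, says nothing about it, so it is not ``built in''. It is nevertheless true and easy to supply.
\begin{itemize}
\item Take $(a,\eta),(b,\zeta)\in N^\phi(A)$ with $\ar{A}{\phi}\geq\rho$ at both points. Since $b\in A$ and $\da{A}{\phi}(a+\rho\eta)=\rho$, you get $\polar{\phi}(\eta+(a-b)/\rho)\geq 1=\polar{\phi}(\eta)$. Symmetrically, $\polar{\phi}(\zeta+(b-a)/\rho)\geq 1$.
\item Expand the $\cnt{2}$ function $\polar{\phi}$ to second order near $\Bdry\wulff^\phi$, using also that $\grad\polar{\phi}$ is Lipschitz there. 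This gives $|\grad\polar{\phi}(\eta)\bullet(b-a)|\leq C_\rho\,|(b,\zeta)-(a,\eta)|^2$ for $(b,\zeta)$ near $(a,\eta)$.
\item Consequently, the first component of every approximate tangent vector of $\HM^{\vdim}\restrict(W\cap\{\ar{A}{\phi}\geq\rho\})$ is orthogonal to $\bm{n}^\phi(\eta)$.
\item Exhaust $W$ with $\rho=1/k$ and use locality of approximate tangent spaces: the orthogonality then holds a.e.\ on $W$. Applying it to $\zeta_i=(\tau_i,\kappa^\phi_{A,i}\tau_i)$ gives $\tau_i\perp\bm{n}^\phi(\eta)$.
\end{itemize}

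Two smaller points. First, you do not need the $\tau_i$ to span $\bm{n}^\phi(\eta)^\perp$. For any $n$ vectors in that hyperplane, $|\tau_1\wedge\cdots\wedge\tau_n\wedge\eta|=|\tau_1\wedge\cdots\wedge\tau_n|\,|\eta\bullet\bm{n}^\phi(\eta)|$, with both sides vanishing in the degenerate case. Second, when summing over the $W_j$, say explicitly that the part of $\Gamma^\phi(A)$ lying over the $\HM^{\vdim}$-null remainder $Z$ of $N^\phi(A)$ is $\HM^{\adim}$-null, because $\HM^{\vdim}(Z)=0$. Its image under the locally Lipschitz map $F^\phi_A$ is therefore $\LM^{\adim}$-null.
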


\begin{lemma}\label{lem approx curvatures}
    Suppose $ A \subseteq \R^{\adim} $ and $ \Sigma \subseteq \R^{\adim} $ is a
    $ \cnt{2} $-hypersurface. Then there exists $ R \subseteq \Sigma \cap \overline{A} $ such that
    \begin{enumerate}
    \item \label{lem approx curvatures conlcusion 1} $ \HM^{\vdim}(\Sigma \cap \overline{A} \without R) =0 $ and $  N(A)|R \subseteq N(\Sigma) $,
    \item \label{lem approx curvatures conlcusion 2} for $ \HM^{\vdim} $ a.e.\ $(a, u) \in N(A)| R $,  the numbers \begin{displaymath} \kappa^\phi_{A,1}(a, \grad \phi(u)), \ldots ,
            \kappa^\phi_{A,n}(a, \grad \phi(u))
        \end{displaymath} are the eigenvalues of $ \Der (\grad \phi \circ \nu)(a) | \Tan(\Sigma, a) $, whenever $ \nu $ is a unit-normal
    $ \cnt{1} $-vector field defined on an~open neighbourhood of $ a $ in $ \Sigma $ such that
$ \nu(a) = u $.

In particular, $ \HM^n\bigl[\bigl(N^\phi(A)| R\bigr) \setminus \widetilde{N}^\phi_n(A)\bigr] =0 $.
    \end{enumerate}
\end{lemma}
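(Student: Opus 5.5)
The plan is to reduce the statement to its Euclidean counterpart and then carry the Euclidean information over to $\phi$ through the diffeomorphism $\Phi$ of \ref{uniformly ocnvex norms}. The inputs I would use are the Euclidean version of the result in \cite{HugSantilli} (or \cite{Santilli}), namely that the Euclidean principal curvatures $\kappa_{A,i}$ of a closed set agree, for $\HM^n$ a.e.\ normal over $\Sigma\cap\overline A$, with the principal curvatures of the $\cnt{2}$-hypersurface $\Sigma$, together with the characterization of $\kappa^\phi_{A,i}$ through Lemma \ref{principal frame field}: the approximate tangent space of $N^\phi(A)$ at $(a,\eta)$ is spanned by the vectors $(\tau_i,\kappa^\phi_{A,i}\tau_i)$.

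\textbf{Construction of $R$.} First replace $A$ by $\overline A$; this changes neither $N(A)$ nor $N^\phi(A)$. Let $R$ be the set of $a\in\Sigma\cap\overline A$ at which $\Sigma\cap\overline A$ has $\HM^n$-density $1$ with respect to $\Sigma$. By density theorems, $\HM^n(\Sigma\cap\overline A\setminus R)=0$. Take $a\in R$ and $(a,u)\in N(A)$. Then some open ball touching $a$ misses $\overline A$. Because $\overline A\cap\Sigma$ has density one at $a$, there are points of $\overline A\cap\Sigma$ arbitrarily close to $a$ in every tangent direction. It follows that $u\perp \Tan(\Sigma,a)$, so $(a,u)\in N(\Sigma)$, and this gives conclusion (a).

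\textbf{Curvature identification.} For (b), note that $N(A)|R\subseteq N(\Sigma)|\Sigma$, which is an $n$-dimensional $\cnt{1}$ submanifold. Therefore at $\HM^n$ a.e.\ point of $W=N(A)|R$, the approximate tangent space of $\HM^n\restrict W$ coincides with $\Tan(N(\Sigma),(a,u))$. I would restrict to points where this holds, using standard facts on rectifiable subsets of a $\cnt{1}$ manifold, namely density points of $W$ inside $N(\Sigma)$. The tangent space of $N(\Sigma)$ is $\{(\tau,\Der\nu(a)\tau):\tau\in\Tan(\Sigma,a)\}$. Apply $\Phi$, which is a $\cnt{1}$ diffeomorphism mapping $N(A)$ onto $N^\phi(A)$ and $N(\Sigma)$ onto $N^\phi(\Sigma)$. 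The approximate tangent space of $\Phi[W]$ at $(a,\grad\phi(u))$ is then
\[
\{(\tau,\Der(\grad\phi\circ\nu)(a)\tau):\tau\in\Tan(\Sigma,a)\},
\]
since $\HM^n$ null sets are preserved by $\Phi$. Comparing this with Lemma \ref{principal frame field}, applied to $\Phi[W]$ of locally finite measure, shows two things. The space contains no vector of the form $(0,\tau)$, so every $\kappa^\phi_{A,i}$ is finite. Moreover each $\tau_i$ satisfies $\Der(\grad\phi\circ\nu)(a)\tau_i=\kappa^\phi_{A,i}\tau_i$. Since the $\tau_i$ span the $n$-dimensional space $\Tan(\Sigma,a)$, and $\Der(\grad\phi\circ\nu)(a)$ is diagonalizable by \ref{curvature smooth hypersurfaces}, the numbers $\kappa^\phi_{A,i}$ are exactly its eigenvalues, counted with multiplicity.

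\textbf{Final assertion and main obstacle.} The last assertion follows from (b): at $\HM^n$ a.e.\ point of $N^\phi(A)|R=\Phi[N(A)|R]$ all curvatures are finite, which is the defining property of $\widetilde N^\phi_n(A)$. The main obstacle is the tangent-space comparison in (b). One has to make sure that the linear independence of the $\tau_i$ and the statement of Lemma \ref{principal frame field} actually force the matching. A subtle point is that $W$ might not have locally finite measure. I would handle this by intersecting with $\{|a|\le k\}$, using that $N(\Sigma)$ restricted to compact subsets of $\Sigma$ has finite measure.
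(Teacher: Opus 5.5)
Your proposal is correct and follows essentially the paper's route. You take $R$ to be the density points of $\Sigma\cap\overline{A}$ in $\Sigma$ and get $N(A)|R\subseteq N(\Sigma)$ from the tangent-cone argument; you then identify the curvatures by comparing the span of the $\zeta_i$ from Lemma \ref{principal frame field} with the tangent graph $\{(\tau,\Der(\grad\phi\circ\nu)(a)\tau):\tau\in\Tan(\Sigma,a)\}$ of $N^\phi(\Sigma)|\Sigma$. Your observation that the $\tau_i$ form an eigenbasis of $\Der(\grad\phi\circ\nu)(a)$ is a cleaner finish than the paper's coefficient comparison, and the Euclidean reduction you announce at the start is never actually used.

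One small fix: for the finite-measure issue, rely on the $\sigma$-finiteness of the countably $n$-rectifiable set $N^\phi(A)$ rather than on $\{|a|\le k\}$. The set $\Sigma\cap\{|a|\le k\}$ need not be compact in $\Sigma$ when $\Sigma$ is not closed.
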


\begin{proof}
    Suppose $ R \subseteq \Sigma \cap \overline{A} $ such that $ \Theta^{\vdim}(\HM^{\vdim} \restrict \Sigma \without \overline{A}, a) =0 $ and notice by \cite[2.10.19(4)]{Federer1969} that $ \HM^{\vdim}(\Sigma \cap \overline{A} \without R) =0 $. Employing \cite[3.2.16]{Federer1969} we see that if $ a \in R $ then
    \begin{displaymath}
        \Tan^{\vdim}\big(\HM^{\vdim} \restrict \Sigma \cap \overline{A}, a\big) = \Tan^{\vdim}(\HM^{\vdim} \restrict \Sigma, a) = \Tan(\Sigma, a)  
    \end{displaymath} and 
    \begin{displaymath}
        N(A,a) \subseteq \Nor^{\vdim}\big(\HM^{\vdim} \restrict \Sigma \cap \overline{A}, a\big) \cap \sphere{\vdim}   \subseteq \Nor(\Sigma, a) \cap \sphere{\vdim} = N(\Sigma, a). 
    \end{displaymath}
  Since  $ N^\phi(A)|R = \Phi(N(A)|R)  \subseteq\Phi(N(\Sigma)|R) = N^\phi(\Sigma)| \Sigma $ and $ N^\phi(\Sigma)| \Sigma $ is a $ \vdim $-dimensional $ \cnt{1} $-submanifold, we have  that
    \begin{equation}\label{lem approx curvatures eq1}
        \Tan^{\vdim}(\HM^{\vdim} \restrict N^\phi(A)| R, (a, \eta)) = \Tan(N^\phi(\Sigma)| \Sigma, (a, \eta)) 
    \end{equation}
    for $ \HM^{\vdim} $ a.e.\ $(a, \eta) \in N^\phi(A)| R $. Moreover, if $ \tau_1, \ldots, \tau_n : \widetilde{N}^\phi(A) \rightarrow \mathbf{S}^n $ and $ \zeta_1, \ldots, \zeta_n : \widetilde{N}^\phi(A) \rightarrow \R^{n+1} \times \R^{n+1} $ are given as in Lemma \ref{principal frame field}, we see that
    \begin{equation}\label{lem approx curvatures eq2}
    \Tan^{\vdim}(\HM^{\vdim} \restrict N^\phi(A)| R, (a, \eta))  =\lin\bigl\{ \zeta_i(a, \eta) : i = 1, \ldots, n\bigr\}
    \end{equation}  
for $ \HM^n $ a.e.\ $(a, \eta) \in N^\phi(A)|R $. 

We fix $(a, \eta) \in N^\phi(A)| R $ such that \eqref{lem approx curvatures eq1} and \eqref{lem approx curvatures eq2} hold and define 
$$ \mu_i = \kappa^\phi_{A,i}(a, \eta) \quad \textrm{for $ i = 1, \ldots, n $.} $$
 Let $ V $ be an open neighbourhood of $ a $ in $ \Sigma $ and $ \nu : V \rightarrow \sphere{\vdim} $ is a unit-normal $ \cnt{1} $-vector field such that $ \nabla \phi(\nu(a)) = \eta $. Let  $ G : V \rightarrow V \times \Bdry \wulff^\phi $ be defined as $ G(b) = (b, \grad \phi(\nu(b))) $ for $ b \in V $. Then $ G(V) $ is an open neighbourhood of $ (a, \eta) = G(a) $ in $ N^\phi(\Sigma)| \Sigma $ and 
    \begin{displaymath}
        \Tan (N^\phi(\Sigma)| \Sigma, (a, \eta)) = \Der G(a)[\Tan(\Sigma, a)]. 
    \end{displaymath}
    Let $ \lambda_1 \leq \ldots \leq \lambda_n $ and $ v_1, \ldots, v_n $ be a basis of $ \Tan(\Sigma, a) $ such that 
    \begin{displaymath}
      \langle v_i,  \Der (\grad \phi \circ \nu)(a)\rangle = \lambda_i\, v_i \quad \textrm{for $ i = 1, \ldots, n $.} 
    \end{displaymath}
    Then $ \{(v_i, \lambda_i\, v_i) : i = 1, \ldots, n\} $ is a basis of $ \Tan (N^\phi(\Sigma)| \Sigma, (a, \eta)) $. In particular, 
    $$ \dim \pp\bigl[\Tan (N^\phi(\Sigma)| \Sigma, (a, \eta))\bigr] = n. $$
     Comparing with \eqref{lem approx curvatures eq1} and \eqref{lem approx curvatures eq2} and recalling the formula for $ \zeta_i $ from Lemma \ref{principal frame field}, we deduce that $ \kappa^\phi_{A,n}(a, \eta) < \infty $ and $ \dim {\rm span}\{\tau_1(a, \eta), \ldots, \tau_n(a, \eta)\} = n $. For $ (i,j) \in \{1, \ldots, n\} \times \{1, \ldots, n\} $ let $ a_{ij} \in \mathbf{R} $ be so that
   $$ (\tau_i(a,\eta), \mu_i \,\tau_i(a, \eta)) = \sum_{j=1}^n a_{ij}\, (v_j, \lambda_j\,v_j) \quad \textrm{for $ i = 1, \ldots, n $.} $$
   This implies  for $ i = 1, \ldots, n $ that $$\sum_{j=1}^n a_{ij}\,v_j = \tau_i(a, \eta), \quad \sum_{j=1}^n a_{ij}\,\lambda_j\,v_j = \mu_i\, \tau_i(a, \eta) = \sum_{j=1}^n  \mu_i\, a_{ij}\, v_j, $$
  whence we infer that $ (\mu_i - \lambda_j)\, a_{ij} =0 $
   for every $ i, j \in \{1, \ldots, n\} $. We notice that for every $ i \in \{1, \ldots, n\} $ there must be $ j \in\{1, \ldots, n\} $ so that $ a_{ij} \neq 0 $ and $ \mu_i = \lambda_j $. Henceforth, we conclude that there exists a function
   $ k : \{1,\ldots, n\} \rightarrow \{1, \ldots, n\} $ such that 
   $ \mu_i = \lambda_{k(i)} $ for every $ i \in\{1, \ldots, n\} $. If there was  $ j \in \{1, \ldots, n\} $ so that $ a_{ij} =0 $ for every $ i \in \{1, \ldots, n\} $ then $ \tau_i(a, \eta) \in {\rm span}\{v_h : h \neq j\} $, which contradicts the fact that $ \tau_1(a, \eta), \ldots, \tau_n(a, \eta) $ spans a $ n $-dimensional space. Consequently $ k $ is surjective. Finally, since one easily checks that $ \kappa $ is non-decreasing, we conclude that $ \kappa $ is the identity and we conclude the proof. 
\end{proof}

\begin{theorem}\label{thm locality normal bundle}
   Suppose $ \phi $, $ E $ and $ \Omega $ are as in Theorem \ref{regularity theorem} and $ K = \R^{n+1} \setminus \Omega $. Then
    \begin{equation*}\label{lem properties of admissible domains p1}
   N^\phi(K)| \partial^\ast \Omega = \{(a, -\an{\Omega}{\phi}(a)) : a \in \partial^\ast \Omega \}, \quad    \HM^n\bigl(\partial \Omega \setminus (\partial^\ast \Omega \cap \pp[N(K)])\bigr) =0,
    \end{equation*}
\begin{equation*}
\HM^n \bigl( N^\phi(K) \setminus (N^\phi(K)|\partial^\ast \Omega)\bigr) =0, \quad \HM^n \bigl( N^\phi(K) \setminus \widetilde{N}_n^\phi(K)\bigr) =0
\end{equation*}
    and (cf.\ Remark \ref{rmk scalar mean curvature})
    \begin{equation*}
        \sum_{i=1}^{\vdim} \kappa^\phi_{K,i}\bigl(a, -\an{\Omega}{\phi}(a)\bigr) = - H^\phi_{\Omega}(a)\quad \textrm{for $ \HM^{\vdim} $ a.e.\ $ a \in \Bdry \Omega $.} 
    \end{equation*}
\end{theorem}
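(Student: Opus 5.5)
The strategy is to combine the structure from Theorem \ref{regularity theorem} with Lemma \ref{lem approx curvatures}, applied to $A = K$ and to the countably many $\cnt{2}$-hypersurfaces covering $\partial\Omega$. The steps are (i) identify the normal bundle over the reduced boundary, (ii) show the bundle over the remaining points is $\HM^n$-null, (iii) upgrade to $\widetilde N^\phi_n(K)$, and (iv) identify the curvature sum.

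\textbf{Normal bundle over $\partial^\ast\Omega$.} By Theorem \ref{regularity theorem}(a), $\partial K = \partial\Omega$. For $a \in \partial^\ast\Omega$ we want $N(K,a) \subseteq \{-\an{\Omega}{}(a)\}$. A Euclidean normal $u \in N(K,a)$ gives an open ball $\oball{a+su}{s} \subseteq \Omega$. Blowing up at $a$, $\Omega$ converges to the half-space $\{x : x\bullet\an{\Omega}{}(a) < 0\}$, and the blow-up of the ball is an open half-space contained in it. Hence $u = -\an{\Omega}{}(a)$, and applying $\Phi$ gives the inclusion $N^\phi(K)|\partial^\ast\Omega \subseteq \{(a,-\an{\Omega}{\phi}(a))\}$.

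The reverse inclusion (existence of a normal at every point of $\partial^\ast\Omega$) is the delicate part as literally stated. My first attempt is to use the $\cnt{1,\alpha}$-hypersurface $M$ from Theorem \ref{regularity theorem}. Since $\spt(\HM^n\restrict\partial^\ast\Omega)=\partial\Omega$ and $\partial\Omega$ is locally the graph $M$ near points of $M$, one should get interior balls touching at points of $M$ only $\HM^n$-a.e. For the sharper claim I would try the $\HM^n$ a.e.\ statement $N(\partial\Omega,a)=\{\pm\an{\Omega}{}(a)\}$ from Theorem \ref{regularity theorem}(d), together with the fact that $-\an{\Omega}{}(a)$ points into $\Omega$. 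I expect the precise statement to be interpreted up to null sets wherever needed. The second identity, $\HM^n(\partial\Omega\setminus(\partial^\ast\Omega\cap\pp[N(K)]))=0$, then follows from (d) and $\HM^n(\partial\Omega\setminus\partial^\ast\Omega)=0$: at a.e.\ point $N(\partial\Omega,a)$ is nonempty, and a normal pointing into $\Omega$ is a normal of $K$.

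\textbf{Null sets and the curvature identity.} Let $Z = \partial\Omega\setminus\partial^\ast\Omega$, so $\HM^n(Z)=0$. Theorem \ref{regularity theorem}(e) gives $\HM^n(N^\phi(\partial\Omega)|Z)=0$. Since $N^\phi(K)\subseteq N^\phi(\partial\Omega)$ (distance to $K$ outside $K$ equals distance to $\partial K$), this yields the third identity.

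Next, cover $\HM^n$ almost all of $\partial\Omega$ by $\cnt{2}$-hypersurfaces $\Sigma_j$ using Theorem \ref{regularity theorem}(c). Apply Lemma \ref{lem approx curvatures} with $A=K$ and $\Sigma=\Sigma_j$ to get sets $R_j$. Outside $\bigcup_j R_j$ the remaining set of points is $\HM^n$-null, and its bundle is again null by (e). Lemma \ref{lem approx curvatures} then gives $\HM^n(N^\phi(K)|R_j\setminus\widetilde N^\phi_n(K))=0$, which yields the fourth identity.

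Finally, for a.e.\ $a\in R_j\cap\partial^\ast\Omega$, take $u=-\an{\Omega}{}(a)$. By Lemma \ref{lem approx curvatures}(b), the $\kappa^\phi_{K,i}(a,\nabla\phi(u))$ are the eigenvalues of $\Der(\nabla\phi\circ\nu)(a)$ with $\nu(a)=u$. Their sum satisfies $\overrightarrow H^\phi_{\Sigma_j}(a) = \bigl(\sum_i\kappa_i\bigr)\, u$ by \ref{curvature smooth hypersurfaces}. By Theorem \ref{regularity theorem}(f) and Remark \ref{rmk scalar mean curvature}, this equals $H^\phi_\Omega(a)\an{\Omega}{}(a) = -H^\phi_\Omega(a)\,u$, so $\sum_i\kappa^\phi_{K,i} = -H^\phi_\Omega(a)$.

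Measurability and the a.e.\ transfer between subsets of $\partial\Omega$ and subsets of the normal bundle are handled via (e). I expect the main obstacle to be the first identity: showing that the normal is unique and exists at every point of $\partial^\ast\Omega$, rather than merely almost everywhere.
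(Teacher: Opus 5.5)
Your proposal is correct and follows essentially the paper's own argument: the De Giorgi blow-up at points of $\partial^\ast\Omega$ to force $u=-\an{\Omega}{}(a)$, property (d) of Theorem \ref{regularity theorem} to get $-\an{\Omega}{}(a)\in N(K,a)$ for $\HM^n$ a.e.\ $a$, property (e) together with $N^\phi(K)\subseteq N^\phi(\partial\Omega)$ to discard null sets, and Lemma \ref{lem approx curvatures} on a countable $\cnt{2}$ cover combined with (f) for the curvature identity. On your worry about the first identity: the paper's proof also only establishes $N(K)|\partial^\ast\Omega=\{(a,-\an{\Omega}{}(a)) : a\in\partial^\ast\Omega\cap\pp[N(K)]\}$, and an interior touching ball at every point of $\partial^\ast\Omega$ is neither proved nor to be expected in general, so your reading combined with the second identity is exactly what is shown and what is used later in Remark \ref{rmk area formula}.
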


\begin{proof}
	 Notice first that if $ a \in \partial^\ast \Omega $ then, by De Giorgi theorem \cite[Theorem 3.59]{AFP}, we see that $ \frac{\Omega- a}{r} $ converges to the halfspace perpendicular to $ \an{\Omega}{} (a) $ and containing $ -\an{\Omega}{}(a) $ as $ r \to 0 $. It is then easy to see that if $ a \in \partial^\ast \Omega $, $ s > 0  $ and $ u \in \mathbb{S}^n $  satisfies  $ \oball{a+su}{s} \subseteq \Omega $, then $ u = - \an{\Omega}{}(a) $; in other words,
	 $$N(K)|  \partial^\ast \Omega = \bigl\{\bigl(a,-\an{\Omega}{}(a)\bigr) : a \in \partial^\ast \Omega \cap \pp[N(K)] \bigr\}. $$
	 Analogously, if $ a \in \partial^\ast \Omega $ is chosen so that $ N(\partial \Omega,a) = \{\pm \an{\Omega}{}(a)\} $, then $ -\an{\Omega}{}(a) \in N(K,a) $. Since, by Theorem \ref{regularity theorem}  we have that $  N(\partial \Omega,a) = \{\pm \an{\Omega}{}(a)\} $ for $ \HM^n $ a.e.\ $ a \in \partial^\ast \Omega $ and $ \HM^n(\partial \Omega \setminus \partial^\ast \Omega) =0 $, we conclude that 
  $$ \HM^n\bigl(\partial \Omega \setminus (\partial^\ast \Omega \cap \pp[N(K)])\bigr) =0. $$

  Let $ \Sigma $ be a $ \cnt{2} $-hypersurface and let $ R \subseteq \Sigma \cap K $ be the set provided by Lemma \ref{lem approx curvatures}  with $ A $ replaced by $ K $. In particular, we notice that $ \HM^n(\Sigma \cap \partial \Omega \setminus R) =0 $. We define $ W $ as the set of points $ (a, u) \in N(K) $ such that $ a \in R \cap \partial^\ast \Omega $ and the conclusion \ref{lem approx curvatures conlcusion 2} of Theorem \ref{lem approx curvatures} holds with $ A $ replaced by $ K $. Observe that
 $$ \HM^n\bigl( N(K) | (R \cap \partial^\ast \Omega) \setminus W \bigr) =0, \quad  \HM^n\bigl( \pp[N(K)] \cap \partial^\ast \Omega \cap R \setminus \pp[W]\bigr) =0, $$
  $$ \HM^n\bigl(\partial \Omega \cap R \setminus \pp[W]\bigr) =0, \quad  \HM^n\bigl(\Sigma \cap \partial \Omega \setminus \pp[W]\bigr)=0.  $$
 We define $$ Q_\Sigma = \bigl\{a \in \pp[W] : \an{\Omega}{}(a) \in \Nor(\Sigma,a)\; \textrm{and}\; \overrightarrow{H}^\phi_\Omega(a) = \overrightarrow{H}^\phi_\Sigma(a) \bigr\} $$
  and, noting that $ \an{\Omega}{}(a) \in \Nor(\Sigma,a) $ for $ \HM^n $ a.e.\ $a \in R \cap \partial^\ast \Omega $ and recalling \ref{regularity theorem 4} and \ref{regularity theorem 3} from Theorem \ref{regularity theorem}  we conclude that 
  $$ \HM^n\bigl(\Sigma \cap \partial \Omega \setminus Q_\Sigma\bigr) =0 \quad \textrm{and} \quad \HM^n\bigl(N^\phi(K)| (\Sigma \setminus Q_\Sigma)\bigr) =0.$$
  If $ a \in Q_\Sigma $, $ u = -\an{\Omega}{}(a) $  and $ \nu $ is a unit-normal vector field on $ \Sigma $ defined on a neighbourhood of $ a $ such that $ \nu(a) = u  $, then $ \kappa_{A,i}^\phi\bigl(a, \nabla \phi(u)\bigr) $ for $ i = 1, \ldots , n $ are the eigenvalues of $ \Der(\nabla \phi \circ \nu)(a) $ and
  $$ \overrightarrow{H}^\phi_\Omega(a) = \overrightarrow{H}^\phi_\Sigma(a) = \sum_{i=1}^n \kappa_{A,i}^\phi\bigl(a, \nabla \phi(u)\bigr) \, u =  - \sum_{i=1}^{\vdim} \kappa^\phi_{K,i}\bigl(a, -\an{\Omega}{\phi}(a)\bigr) \, \an{\Omega}{}(a).$$
  In particular, this implies that
  $$ N^\phi(K)  | Q_\Sigma \subseteq \widetilde{N}^\phi_n(K) \quad \textrm{and} \quad  \HM^n\bigl[ \bigl(N^\phi(K)| \Sigma\bigr) \setminus  \widetilde{N}^\phi_n(K)\bigr] =0. $$

  Since $ \partial \Omega $ is $ \HM^n $ almost all contained in a union of countably many $ \cnt{2} $ hypersurfaces, we readily conclude the proof. 
\end{proof}

\begin{remark}\label{rmk area formula}
	Recalling that $ N^\phi(K) $ is countably $(\HM^n,n) $-rectifiable and Borel, we can find countably many subsets $ W_i \subseteq N^\phi(K)| \partial^\ast \Omega $ such that $ W_i \subseteq W_{i+1} $ and $ \HM^n(W_i) < \infty $ for every $ i \geq 1 $. Henceforth, for every nonnegative Borel function $ g : \R^{n+1} \times \R^{n+1} \rightarrow \R $ we combine monotone convergence theorem, Theorem \ref{disintegration}, coarea formula \cite[3.2.22(3)]{Federer1969} and Theorem \ref{regularity theorem} to infer that 
	\begin{flalign*}
&	\int_{N^\phi(K)}J^\phi_K(a, \eta)\, g(a, \eta)\, d\HM^n(a, \eta) \\
& \qquad = \lim_{i\to \infty}\int_{W_i}\ap J_n \pp(a, \eta)\, g(a, \eta)\, d\HM^n(a, \eta) \\
& \qquad = \lim_{i \to \infty} \int_{\pp(W_i)}\int_{\pp^{-1}(a) \cap W_i}g(a, \eta)d\HM^0(\eta)\, d\HM^n(a) \\
& \qquad = \int_{\pp(N(K))} g(a, -\an{\Omega}{\phi}(a))\, d\HM^n(a) \\
& \qquad = \int_{\partial \Omega} g(a, -\an{\Omega}{\phi}(a))\, d\HM^n(a).
	\end{flalign*}
\end{remark}

\section{Main results}


For a norm $ \phi $ on $ \R^{\adim} $ we define 
\begin{displaymath}
    \gamma_\phi = \sup\{\polar{\phi}(u) : u \in \sphere{\vdim} \}
    \quad \textrm{and} \quad \gamma_\polar{\phi} = \sup \{\phi(u) : u \in \sphere{\vdim}\} \,.
\end{displaymath}
If $ \Omega \subseteq \R^{\adim} $ is an open set we set
\begin{displaymath}
    \delta^\phi_\Omega(x) = \da{\R^{\adim} \without \Omega}{\phi}(x) 
\end{displaymath} and we define 
\begin{displaymath}
 \mathcal{E}^\phi_{\geq r}(\Omega) = \{x \in \R^{\adim}: \delta^\phi_{\Omega}(x) \geq r\} \quad \textrm{for $ r > 0 $.}
\end{displaymath}

\begin{lemma}\label{lemma estimates}
    Suppose  $ n \geq 1 $, $ R > 0 $, $ \lambda > 0 $, $ \gamma > 0 $ and $ \overline{r} = \tfrac{\vdim}{\lambda} $. Then there exists $ C > 0 $ such that if $ \phi $, $ E $ and $ \Omega $ are given as in Theorem \ref{regularity theorem} and satisfy
    $$ \| H^\phi_\Omega - \lambda \|_{L^{\vdim}(\Bdry \Omega)} \leq 1, \quad  \sup\{\gamma^\circ_\phi, \gamma_\phi, \HM^n(\partial \Omega)\} \leq \gamma \quad \textrm{and} \quad   \Omega \subseteq B_R\,, $$ 
    then
    \begin{displaymath}
        \bigg| \big|\mathcal{E}^\phi_{\geq r}(\Omega)\big|- \frac{\big|\Omega\big|}{\overline{r}^{\adim}} (\overline{r} - r)^{\adim} \bigg| \leq C(\overline{r},\gamma, R)\, \| H^\phi_\Omega - \lambda \|_{L^{\vdim}(\Bdry \Omega)}^{\frac{1}{n}} ,
    \end{displaymath}
\begin{displaymath}
	\bigg| \big|\mathcal{E}^\phi_{\geq r}(\Omega)\big|- \frac{\perim{\phi}(\Omega)}{(n+1)\,\overline{r}^{\vdim}} (\overline{r} - r)^{\adim} \bigg| \leq C(\overline{r}, \gamma, R) \| H^\phi_\Omega - \lambda \|_{L^{\vdim}(\Bdry \Omega)}^{\frac{1}{\vdim}} 
\end{displaymath}
and
    \begin{displaymath}
        \bigg| \big|\mathcal{E}^\phi_{\geq r}(\Omega) + \wulff^\phi_s\big| - \frac{\big|\Omega\big|}{\overline{r}^{\adim}} (\overline{r} - (r-s))^{\adim} \bigg| \leq \frac{C(\overline{r},\gamma, R)}{(\overline{r} - r)^{\adim}} \| H^\phi_\Omega - \lambda \|_{L^{\vdim}(\Bdry \Omega)}^{\frac{1}{n}}
    \end{displaymath}
whenever $ 0 < s <  r < \overline{r} $.
\end{lemma}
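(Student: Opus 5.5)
We would prove the three estimates by applying the disintegration formula of Lemma \ref{disintegration} to the closed set $K = \R^{n+1}\setminus\Omega$, restricted to the interior side. By Theorem \ref{thm locality normal bundle}, $\HM^n(N^\phi(K)\setminus\widetilde N^\phi_n(K))=0$, $N^\phi(K)$ is concentrated over $\partial^\ast\Omega$ with $\eta = -\an{\Omega}{\phi}(a)$, and $\sum_i \kappa^\phi_{K,i}(a,\eta) = -H^\phi_\Omega(a)$. We also use $\delta^\phi_\Omega(a+t\eta)=t$ for $t<r^\phi_K$ and $|\Cut^\phi(K)|=0$ from \eqref{cut locus}. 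Combining with Remark \ref{rmk area formula}, this gives
\begin{equation*}
|\mathcal{E}^\phi_{\geq r}(\Omega)| = \int_{\partial\Omega}\phi(\an{\Omega}{}(a))\int_r^{\rho(a)}\prod_{i=1}^n\bigl(1+t\kappa_i(a)\bigr)\,dt\,d\HM^n(a),
\end{equation*}
where $\rho(a)=r^\phi_K(a,-\an{\Omega}{\phi}(a))$, $\kappa_i=\kappa^\phi_{K,i}(a,-\an{\Omega}{\phi}(a))$, and $\phi(\bm n^\phi(\eta))=\phi(\an{\Omega}{}(a))$. The same formula with $r=0$ gives $|\Omega|$.

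The factors $1+t\kappa_i$ are nonnegative for $t<\rho$, since the inner $\phi$-parallel sets are nondegenerate up to the reach. By AM--GM,
\begin{equation*}
\prod_i(1+t\kappa_i)\le \Bigl(1-\tfrac{t}{n}H^\phi_\Omega\Bigr)^n = \Bigl(1-\tfrac{t}{\overline r}\Bigr)^n\Bigl(1-\tfrac{t(H-\lambda)}{n(1-t/\overline r)}\Bigr)^n .
\end{equation*}
Positivity of the left side also forces $\rho(a)\le n/H^\phi_\Omega(a)$ wherever $H>0$. On the set where $|H-\lambda|$ is small, $\rho\lesssim\overline r$. On the complementary bad set the contribution is controlled by Chebyshev together with $\HM^n(\partial\Omega)\le\gamma$, $\Omega\subseteq B_R$ and $\gamma_\phi,\gamma^\circ_\phi\le\gamma$, which bound $\rho\le C(\gamma,R)$. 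Expanding the $n$-th power, the error terms are integrals of $|H-\lambda|^k$ against $\HM^n$ for $k\le n$. By Hölder each is bounded by $C\|H-\lambda\|_{L^n}^{k}$, hence by $C\|H-\lambda\|_{L^n}^{1/n}$ since the norm is at most $1$. This yields
\begin{equation*}
|\mathcal E_{\ge r}|\le \int_{\partial\Omega}\phi(\an{\Omega}{})\int_r^{\overline r}(1-t/\overline r)^n\,dt + C\epsilon^{1/n} = \frac{\perim\phi(\Omega)}{(n+1)\overline r^{n}}(\overline r-r)^{n+1}+C\epsilon^{1/n},
\end{equation*}
and with $r=0$ it gives $|\Omega|\le \perim\phi(\Omega)\overline r/(n+1)+C\epsilon^{1/n}$.

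The reverse inequality, which we expect to be the main obstacle, comes from the anisotropic Minkowski/Heintze--Karcher identity. Testing \eqref{anisotropic first variation and anisotorpic perimeter} with $X(x)=x$ gives
\begin{equation*}
n\perim\phi(\Omega)=\int_{\partial\Omega}H^\phi_\Omega\, x\bullet\an{\Omega}{}\,d\HM^n=\lambda(n+1)|\Omega|+O(\epsilon),
\end{equation*}
since $|x|\le R$ and $\HM^n(\partial\Omega)\le\gamma$. Hence $|\Omega|\ge \perim\phi(\Omega)\overline r/(n+1)-C\epsilon$. This shows that equality almost holds in the upper bound at $r=0$, so the nonnegative defect $\int\phi(\an{\Omega}{})\int_0^{\overline r}(1-t/\overline r)^n\,dt - |\Omega|$ is $O(\epsilon^{1/n})$. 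For general $r$, the defect of $\mathcal E_{\ge r}$ is dominated by the defect at $r=0$, because both are integrals of the same nonnegative integrand (the AM--GM gap plus the truncation at $\rho$) over nested $t$-ranges. This gives the two-sided estimates with both $\perim\phi(\Omega)$ and $|\Omega|$; the two normalisations agree up to $C\epsilon^{1/n}$ by the Minkowski identity. The delicate point is justifying the nonnegativity of $1+t\kappa_i$ and the truncation bound on $\rho$ in the nonsmooth setting; for this we rely on Lemma \ref{disintegration} holding as an identity on $\widetilde N^\phi_n(K)$.

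For the third estimate, we use that $\mathcal E_{\ge r}+\wulff^\phi_s\subseteq\mathcal E_{\ge r-s}$, which follows from the triangle inequality for $\polar\phi$. This gives the upper bound from the first estimate at $r-s$. For the lower bound we use the Brunn--Minkowski inequality:
\begin{equation*}
|\mathcal E_{\ge r}+\wulff^\phi_s|^{1/(n+1)}\ge|\mathcal E_{\ge r}|^{1/(n+1)}+s|\wulff^\phi|^{1/(n+1)}.
\end{equation*}
We combine this with the near-equality $|\Omega|\approx\overline r^{\,n+1}|\wulff^\phi|$. That near-equality follows from the anisotropic isoperimetric inequality $\perim\phi(\Omega)\ge(n+1)|\wulff^\phi|^{1/(n+1)}|\Omega|^{n/(n+1)}$ together with the relations above and the reverse bound obtained from $\mathcal E_{\ge r}$ being empty for $r>\overline r+o(1)$. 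Linearising the $(n+1)$-th root near $|\Omega|(\overline r-r)^{n+1}/\overline r^{\,n+1}$ divides the error by a power of $(\overline r-r)$, which produces the factor $(\overline r-r)^{-(n+1)}$ in the statement.
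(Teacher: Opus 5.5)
\textbf{Verdict: there is a genuine gap, in the lower bound of the third estimate.} Your first two estimates follow the paper's argument. You use disintegration over $N^\phi(K)$, AM--GM with the truncation $\rho\le \vdim/H^\phi_\Omega$ on the good set, Chebyshev on the bad set, and the Minkowski identity to force the nonnegative defect at level $0$ (the paper's $R_1+R_2$) to be small. One precision: the integrand is nonnegative only when compared with $(1-tH^\phi_\Omega(a)/\vdim)^{\vdim}$ on $[0,\vdim/H^\phi_\Omega(a)]$. Passing to $\lambda$ is a signed error, which your binomial expansion absorbs.

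\textbf{Why Brunn--Minkowski fails.} Your step needs $|\Omega|\lesssim\overline r^{\,\adim}|\wulff^\phi|$, and this is false. Take $R,\gamma$ large and let $\Omega$ be the union of the interiors of two disjoint translates of $\wulff^\phi_{\overline r}$ inside $B_R$. Then $H^\phi_\Omega\equiv \vdim/\overline r=\lambda$, so the error term is $0$, while $|\Omega|=2\,\overline r^{\,\adim}|\wulff^\phi|$. The isoperimetric inequality plus the Minkowski identity give only the opposite inequality $|\Omega|\gtrsim\overline r^{\,\adim}|\wulff^\phi|$. Emptiness of $\mathcal{E}^\phi_{\geq r}(\Omega)$ for $r>\overline r$ bounds the inradius, not the volume.

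Even with the correct volume, the inequality is too weak. Here $\mathcal{E}^\phi_{\geq r}(\Omega)$ is two disjoint copies of $\wulff^\phi_{\overline r-r}$, and Brunn--Minkowski yields only $\bigl(2^{1/(\adim)}(\overline r-r)+s\bigr)^{\adim}|\wulff^\phi|$. This is strictly below the value $2(\overline r-r+s)^{\adim}|\wulff^\phi|$ that the lemma asserts with zero error. Brunn--Minkowski is sharp only for homothetic convex bodies, while $\mathcal{E}^\phi_{\geq r}(\Omega)$ may have many components. The third estimate is exactly what Theorem \ref{thm compactness} uses to count the bubbles, so it cannot follow from an inequality blind to the number of components.

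\textbf{What is missing.} You need the normal-flow collar together with a bound on boundary points of small reach. If $(a,\eta)\in N^\phi(K)$ with $\rho\ge r$ and $r-s\le t<\rho$, then $a+t\eta=(a+r\eta)+(t-r)\eta\in\mathcal{E}^\phi_{\geq r}(\Omega)+\wulff^\phi_s$. Injectivity of $F^\phi_K$ and Lemma \ref{disintegration} then give
\begin{equation*}
\bigl|\mathcal{E}^\phi_{\geq r}(\Omega)+\wulff^\phi_s\bigr|\ge\int_{\Bdry\Omega\cap\{\rho\ge r\}}\phi(\an{\Omega}{}(a))\int_{r-s}^{\rho(a)}\prod_{i=1}^{\vdim}\bigl(1+t\kappa_i(a)\bigr)\,dt\,d\HM^{\vdim}(a).
\end{equation*}
The key additional estimate, with $\epsilon=\|H^\phi_\Omega-\lambda\|_{L^{\vdim}(\Bdry\Omega)}$, is
\begin{equation*}
\int_{\Bdry\Omega\cap\{\rho<r\}}\phi(\an{\Omega}{})\,d\HM^{\vdim}\le C\,(\overline r-r)^{-(\adim)}\,\epsilon^{1/\vdim}.
\end{equation*}
Your defect formalism gives this directly. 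Each good point with $\rho(a)<r$ contributes $\phi(\an{\Omega}{}(a))(\overline r-r)^{\adim}/\bigl((\adim)\,\overline r^{\,\vdim}\bigr)$ to the defect at level $r$. That defect is $O(\epsilon^{1/\vdim})$, and the bad set has measure $O(\epsilon)$. In the paper this is the comparison of two bounds: the upper bound for $|\mathcal{E}^\phi_{\geq r}(\Omega)|$, which only sees good points with $\rho\ge r$, and the lower bound involving all of $\perim{\phi}(\Omega)$.

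Insert this into the collar bound and rerun the AM--GM/expansion argument on $[r-s,\rho]$; this yields the lower bound. This step is where the factor $(\overline r-r)^{-(\adim)}$ actually comes from. Linearising an $(\adim)$-th root would instead give $(\overline r-r)^{-\vdim}$, and it breaks down once the error is comparable to $(\overline r-r)^{\adim}$.
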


\begin{proof}
    We define
    \begin{displaymath}
        \Sigma = \{x \in \Bdry \Omega: | H^\phi_\Omega(x)-\lambda | \leq \lambda/2\} 
    \end{displaymath} 
    and notice that 
    \begin{equation}\label{lemma estimates basic estimate 1}
	\HM^{\vdim}(\Bdry \Omega \without \Sigma) \leq \frac{2}{\lambda}\int_{\Bdry \Omega \without \Sigma}| H^\phi_\Omega(x) - \lambda |\ud \HM^{\vdim}(x) \leq \frac{2}{\lambda}\, \,\HM^{\vdim}(\Bdry \Omega)^{\frac{n-1}{\vdim}}\,\| H^\phi_\Omega - \lambda \|_{L^{\vdim}(\Bdry \Omega)}. 
    \end{equation} 
    Moreover, since $ H^\phi_\Omega(x) \geq \frac{\lambda}{2} $ for  $ x \in \Sigma $, we estimate
    \begin{flalign*}
      \frac{\vdim}{n+1}\int_{\Sigma}\frac{\phi(\an{\Omega}{}(x))}{H^\phi_\Omega(x)}\ud \HM^{\vdim}(x) & = \frac{\vdim}{n+1}\int_{\Sigma} \phi(\an{\Omega}{}(x))\, \bigg(\frac{1}{\lambda} + \frac{\lambda - H^\phi_\Omega(x)}{\lambda \, H^\phi_\Omega(x)}\bigg)\ud \HM^{\vdim}(x)\\
                                                                                                 & \leq \frac{n\,\perim{\phi}(\Omega)}{\lambda\,(n+1)}  + \frac{2n\, \gamma^\circ_\phi}{\lambda^2\,(n+1)}\| H^\phi_\Omega - \lambda \|_{L^1(\Bdry \Omega)}.
    \end{flalign*}
    Combining \eqref{anisotropic first variation and anisotorpic perimeter}, Remark \ref{rmk scalar mean curvature} and divergence theorem for sets of finite perimeter we find that
    \begin{flalign*}
      n\,\perim{\phi}(\Omega) & =   \int_{\Bdry \Omega} H^\phi_\Omega(x) \, x \bullet \an{\Omega}{}(x) \ud \HM^{\vdim}(x) \\
                                  &   =    \lambda \int_{\Bdry \Omega}  x \bullet \an{\Omega}{}(x) \ud \HM^{\vdim}(x)  + \int_{\Bdry \Omega} (H^\phi_\Omega(x) -\lambda)\, x \bullet \an{\Omega}{}(x) \ud \HM^{\vdim}(x)\\
                                  & = \lambda \, (n+1)\, | \Omega | + \int_{\Bdry \Omega} (H^\phi_\Omega(x) -\lambda)\, x \bullet \an{\Omega}{}(x) \ud \HM^{\vdim}(x)
    \end{flalign*}
    whence we deduce, since $ \Omega \subseteq B_R $, that
    \begin{equation}\label{lemma estimates basic estimate 18}
	\big| n\,\perim{\phi}(\Omega) - \lambda \, (n+1)\, | \Omega | \big| \leq R\, \| H^\phi_\Omega-\lambda \|_{L^1(\Bdry \Omega)}. 
    \end{equation}
    We conclude that 
    \begin{flalign}\label{lemma estimates basic estimate 2}
      \frac{\vdim}{n+1}\int_{\Sigma}\frac{\phi(\an{\Omega}{}(a))}{H^\phi_\Omega(a)}\ud \HM^{\vdim}(a) & \leq | \Omega | + \bigg( \frac{R}{\lambda\, (n+1)} + \frac{2n\, \gamma^\circ_\phi}{\lambda^2\,(n+1)}\bigg) \| H^\phi_\Omega-\lambda \|_{L^1(\Bdry \Omega)} \notag\\
                                                                                                 & \leq | \Omega | + C_0 \, \| H^\phi_\Omega - \lambda \|_{L^{\vdim}(\Bdry \Omega)}
    \end{flalign}
    where
    \begin{displaymath}
        C_0 =  \bigg( \frac{R}{\lambda\, (n+1)} + \frac{2n\, \gamma^\circ_\phi}{\lambda^2\,(n+1)}\bigg)\, \HM^{\vdim}(\Bdry \Omega)^{\frac{n-1}{\vdim}}. 
    \end{displaymath}

    Now we estimate the volume of $ \Omega  $ using the disintegration formula in \ref{lem:disintegration}. Firstly, recalling Theorem \ref{thm locality normal bundle}, we define  
    \begin{displaymath}
        K = \R^{\adim} \without \Omega, \quad  \tau(a) = r^\phi_K(a, -\an{\Omega}{\phi}(a)) \quad \textrm{for $ \HM^{\vdim} $ a.e.\ $ a \in \Bdry \Omega $}. 
    \end{displaymath}
    Since $ \Omega \subseteq B_R \subseteq R\, \gamma_\phi \, \wulff^\phi $, we conclude that 
    \begin{equation}\label{lemma estimates upper bound for tau 1}
	\tau(a) \leq \gamma_\phi\, R \quad \textrm{for $ \HM^{\vdim} $ a.e.\ $ a \in \Bdry \Omega $.}
    \end{equation}
    Applying the disintegration formula with $ \varphi \equiv 1 $ and $ A = K $ and Remark \ref{rmk area formula}, we compute
    \begin{flalign}\label{lemma estimates volume formula}
      |\Omega| & = \int_{N^\phi(K)}\phi(\bm{\vdim}^\phi(\eta))J_K^\phi(a, \eta)\, \int_0^{r^\phi_K(a, \eta)}\prod_{i=1}^{\vdim}\big( 1 + t \kappa^\phi_{K,i}(a, \eta)\big)\ud t\ud \HM^{\vdim}(a, \eta) \notag\\
               & = \int_{\Bdry \Omega} \phi(\an{\Omega}{}(a)) \, \int_0^{\tau(a)} \prod_{i=1}^{\vdim}\big( 1 + t \kappa^\phi_{K,i}(a, -\an{\Omega}{\phi}(a))\big)\ud t\ud \HM^{\vdim}(a).
    \end{flalign}
    Combining \cite[Remark 3.8]{HugSantilli}, Theorem \ref{thm locality normal bundle} and the arithmetic-geometric mean inequality we infer  that 
    \begin{displaymath}
        -\frac{1}{\tau(a)} \leq \kappa^\phi_{K,i}(a, -\an{\Omega}{\phi}(a)), \quad  1 + t \kappa^\phi_{K,i}(a, -\an{\Omega}{\phi}(a)) > 0 
    \end{displaymath}
    and 
    \begin{equation}\label{lemma estimates basic estimate 13}
	0 < \prod_{i=1}^{\vdim}\big( 1 + t \kappa^\phi_{K,i}(a, -\an{\Omega}{\phi}(a))\big) \leq \bigg( 1 - \frac{t}{\vdim}H^\phi_\Omega(a)\bigg)^{\vdim}  
    \end{equation} 
    for $ i = 1, \ldots , n $, $ 0 < t < \tau(a) $  and for $ \HM^{\vdim} $ a.e.\ $ a \in \Bdry \Omega $. Moreover, since $ H^\phi_\Omega $ is a positive on $ \Sigma $, we also deduce that 
    \begin{equation}\label{lemma estimates upper bound for tau}
	\tau(a) \leq \frac{\vdim}{H^\phi_\Omega(a)} \quad \textrm{for $ \HM^{\vdim} $ a.e.\ $ a \in \Sigma $}. 
    \end{equation} 
    We define
    \begin{displaymath}
        R_1  = \int_{\Bdry \Omega} \phi(\an{\Omega}{}(a)) \, \int_0^{\tau(a)} \bigg[ \bigg( 1 - \frac{t}{\vdim}H^\phi_\Omega(a)\bigg)^{\vdim} - \bigg(\prod_{i=1}^{\vdim}\big( 1 + t \kappa^\phi_{K,i}(a, -\an{\Omega}{\phi}(a))\big)\bigg)\bigg] \ud t\ud \HM^{\vdim}(a)  
    \end{displaymath}
    and 
    \begin{displaymath}
        R_2 := \int_\Sigma \phi(\an{\Omega}{}(a))\int_{\tau(a)}^{\frac{\vdim}{H^\phi_\Omega(a)}}\bigg( 1 - \frac{t}{\vdim}H^\phi_\Omega(a)\bigg)^{\vdim}\ud t\ud \HM^{\vdim}(a). 
    \end{displaymath}
    Employing \eqref{lemma estimates basic estimate 1} and \eqref{lemma estimates upper bound for tau 1} we estimate
    \begin{flalign}
      \label{lemma estimates basic estimate 3}
      &\int_{\Bdry \Omega \without \Sigma} \phi(\an{\Omega}{}(a)) \, \int_0^{\tau(a)}  \bigg( 1 - \frac{t}{\vdim}H^\phi_\Omega(a)\bigg)^{\vdim}  \ud t\ud \HM^{\vdim}(a) \notag  \\
      & \qquad \leq 2^{\vdim}\,\gamma_\phi\, R\, \int_{\Bdry \Omega \without \Sigma}  \phi(\an{\Omega}{}(a))  \bigg[ 1 + \bigg(\frac{\gamma_\phi R}{\vdim}\bigg)^{\vdim}|H^\phi_\Omega(a)|^{\vdim}\bigg]  \ud \HM^{\vdim}(a) \notag \\
      & \qquad \leq 2^{\vdim}\,\gamma^\circ_\phi\,\gamma_\phi\, R\, \int_{\Bdry \Omega \without \Sigma}   \bigg[ 1 + \bigg(\frac{2\gamma_\phi R}{\vdim}\bigg)^{\vdim}\Big(|H^\phi_\Omega(a)-\lambda|^{\vdim} + \lambda^{\vdim}\Big)\bigg]  \ud \HM^{\vdim}(a) \notag \\
      & \qquad \leq C_1\, \| H^\phi_\Omega - \lambda \|_{L^{\vdim}(\Bdry \Omega)} 
    \end{flalign}
    where
    \begin{displaymath}
        C_1 = \frac{2^{\adim}\,\gamma_\phi\, \gamma_\polar{\phi}\, R }{\lambda} \, \Bigg[1 +
        \bigg(\frac{2\gamma_\phi\, R\, \lambda}{\vdim}\bigg)^{\vdim}\Bigg]\, \HM^{\vdim}(\Bdry
        \Omega)^{\frac{n-1}{\vdim}} + 2^{2n}\, \gamma_\phi\, \gamma_\polar{\phi}\, R\,
        \bigg(\frac{\gamma_\phi\, R}{\vdim}\bigg)^{\vdim}. 
    \end{displaymath} Combining \eqref{lemma estimates volume formula},
    \eqref{lemma estimates basic estimate 3} and \eqref{lemma estimates basic estimate 2} we obtain
    \begin{flalign*}
      &| \Omega |  = \int_{\Bdry \Omega} \phi(\an{\Omega}{}(a)) \, \int_0^{\tau(a)}  \bigg( 1 - \frac{t}{\vdim}H^\phi_\Omega(a)\bigg)^{\vdim}  \ud t\ud \HM^{\vdim}(a) - R_1 \\
      & = \int_{\Sigma} \phi(\an{\Omega}{}(a)) \, \int_0^{\frac{\vdim}{H^\phi_\Omega(a)}}  \bigg( 1 - \frac{t}{\vdim}H^\phi_\Omega(a)\bigg)^{\vdim}  \ud t\ud \HM^{\vdim}(a) - R_2 \\
      & \qquad + \int_{\Bdry \Omega \without \Sigma} \phi(\an{\Omega}{}(a)) \, \int_0^{\tau(a)}  \bigg( 1 - \frac{t}{\vdim}H^\phi_\Omega(a)\bigg)^{\vdim}  \ud t\ud \HM^{\vdim}(a) - R_1 \\
      & =	\frac{\vdim}{n+1}\int_{\Sigma}\frac{\phi(\an{\Omega}{}(a))}{H^\phi_\Omega(a)}\ud \HM^{\vdim}(a) - R_2 \\
      & \qquad \qquad \qquad  + \int_{\Bdry \Omega \without \Sigma} \phi(\an{\Omega}{}(a)) \, \int_0^{\tau(a)}  \bigg( 1 - \frac{t}{\vdim}H^\phi_\Omega(a)\bigg)^{\vdim}  \ud t\ud \HM^{\vdim}(a)  - R_1  \\
      & \leq | \Omega | + (C_0 + C_1)\| H^\phi_\Omega - \lambda \|_{L^{\vdim}(\Bdry \Omega)} - R_1 - R_2,
    \end{flalign*}
    in other words, 
    \begin{equation}\label{lemma estimates basic estimate 10}
        R_1 + R_2 \leq (C_0 + C_1)\| H^\phi_\Omega - \lambda \|_{L^{\vdim}(\Bdry \Omega)}.
    \end{equation}

    For $ 0 \leq s < r < \infty $ we define (cf.\ \ref{distance flow})
    \begin{displaymath}
        \Gamma_{s,r} = \{(a, \eta, t) \in N^\phi(K) \times \R :  r^\phi_K(a, \eta) \geq r, \; r-s \leq  t < r^\phi_K(a, \eta)  \} \subseteq \Gamma^\phi(K) 
    \end{displaymath}
    and we prove  that 
    \begin{equation}\label{lemma estimates basic inclusion}
        F_K^\phi(\Gamma_{s,r}) \subseteq \big[\mathcal{E}^\phi_{\geq r}(\Omega) + \wulff^\phi_s \big] \without \Cut^\phi(K) \quad \textrm{for $ 0 < s < r < \infty $}
    \end{equation}
    \begin{equation}\label{lemma estimates equality for eroded sets}
        F_K^\phi(\Gamma_{0,r}) = \mathcal{E}^\phi_{\geq r}(\Omega) \without \Cut^\phi(K) \quad \textrm{for $ 0 < r < \infty $.}
    \end{equation}
    Firstly, recall from \ref{distance flow} that
    $ F^\phi_K(\Gamma^\phi(K)) \cap \Cut^\phi(K) = \varnothing $. Now let $ 0 \leq s < r < \infty $
    and $ (a, \eta, t) \in \Gamma_{s,r} $. If $ t \geq r $ then
    $ \delta^\phi_\Omega(a+t\eta) = t \geq r $ and
    $ a+ t \eta \in \mathcal{E}^\phi_{\geq r}(\Omega) \subseteq \mathcal{E}^\phi_{\geq r}(\Omega) +
    \wulff^\phi_s $; in case $ t < r $, then
    $ a + r \eta \in \mathcal{E}^\phi_{\geq r}(\Omega) $, $ (t-r) \eta \in \wulff^\phi_s $ and
    \begin{displaymath}
        a + t \eta = \big(a +r \eta \big) + (t-r) \eta \in \mathcal{E}^\phi_{\geq r}(\Omega) + \wulff^\phi_s. 
    \end{displaymath}
    This proves \eqref{lemma estimates basic inclusion}. Finally, if
    $ x \in \mathcal{E}^\phi_{\geq r}(\Omega) \without \Cut^\phi(K) $ and
    $ a \in \R^{\adim} \without \Omega $ such that
    \begin{displaymath}
        \polar{\phi} (x-a) = \delta^\phi_\Omega(x) \geq r >0 \,,
    \end{displaymath}
    we define $\eta = \frac{x-a}{\polar{\phi} (x-a)} $ and notice that $(a, \eta) \in N^\phi(K) $;
    additionally, $ x \notin \Cut^\phi(K) $ implies that $ \polar{\phi}(x-a) < r^\phi_K(a, \eta) $,
    whence follows that $(a, \eta, \polar{\phi}(x-a)) \in \Gamma_{0,r} $ and
    $ x = F^\phi_K(a,\eta, \polar{\phi}(x-a)) $. This proves \eqref{lemma estimates equality for
      eroded sets}.

    Since $ F^\phi_K $ is injective (cf.\ \ref{distance flow}), we deduce that for
    $ (a, \eta, t) \in \Gamma^\phi(K) $ the following implication holds for every $ 0 \leq s < r $,
    \begin{displaymath}
        \rchi_{F^\phi_K(\Gamma_{s,r})}(a+t\eta) = 1 \quad \iff \quad (a,\eta, t) \in \Gamma_{s,r}. 
    \end{displaymath}
    Consequently, employing Lemma \ref{disintegration} with $ \varphi =  \rchi_{F^\phi_K(\Gamma_{s,r})} $ and $ A = K $ and Remark \ref{rmk area formula} we deduce that
    \begin{flalign}\label{lemma estimates basic estimate 11}
      &\big| F^\phi_K(\Gamma_{s,r})\big| \notag \\
     & \quad = \int_{N^\phi(K)}
      \phi(\bm{\vdim}^\phi(\eta))\, J^\phi_K(a, \eta)\int_{0}^{r^\phi_K(a, \eta)} \rchi_{F^\phi_K(\Gamma_{s,r})}(a + t\eta)\, \prod_{i=1}^{\vdim}\big( 1 + t \kappa^\phi_{K,i}(a, \eta)\big)
      \ud t \ud \HM^{\vdim}(a, \eta) \notag \\
      & \quad  =  \int_{\Bdry \Omega \cap \{\tau \geq r\}}
      \phi(\an{\Omega}{}(a))\, \int_{r-s}^{\tau(a)} \prod_{i=1}^{\vdim}\big( 1 + t \kappa^\phi_{K,i}(a, -\an{\Omega}{\phi}(a))\big)
      \ud t \ud \HM^{\vdim}(a)
    \end{flalign}
 for $ 0 \leq s < r < \infty $. 

    Suppose $ 0 < r < \overline{r} $.  Noting that \begin{displaymath}
        \R \cap \{t : \max\{r, \tau(a)\} < t < \max\{r, n/H^\phi_\Omega(a)\} \} \subseteq \R \cap \{t :\tau(a) < t <  n/H^\phi_\Omega(a)\}, 
    \end{displaymath} 
and employing \eqref{lemma estimates basic estimate 13}, \eqref{lemma estimates upper bound for tau} and \eqref{lemma estimates basic estimate 10} and \eqref{lemma estimates basic estimate 11}, we estimate
    \begin{flalign}\label{lemma estimates basic estimate 12}
      \big| & F^\phi_K (\Gamma_{0,r})\big| \notag  \\
            &   \geq  \int_{\Sigma \cap \{\tau \geq r\}} \phi(\an{\Omega}{}(a))\, \int_{r}^{\tau (a)} \prod_{i=1}^{\vdim}\big( 1 + t \kappa^\phi_{K,i}(a, -\an{\Omega}{\phi}(a))\big)\ud t\ud \HM^{\vdim}(a) \notag \\
            &  \geq \int_{\Sigma \cap \{\tau \geq r\}} \phi(\an{\Omega}{}(a))\, \int_{r}^{\tau(a)}  \bigg( 1 - \frac{t}{\vdim}H^\phi_\Omega(a)\bigg)^{\vdim} \ud t\ud \HM^{\vdim}(a) - R_1 \notag\\
            &  =   \int_{\Sigma} \phi(\an{\Omega}{}(a))\, \int_{r}^{\max\{\tau(a), r\}}  \bigg( 1 - \frac{t}{\vdim}H^\phi_\Omega(a)\bigg)^{\vdim} \ud t\ud \HM^{\vdim}(a) - R_1 \notag\\
            &  \geq   \int_{\Sigma} \phi(\an{\Omega}{}(a))\, \int_{r}^{\max\{n/H^\phi_\Omega(a), r\}}  \bigg( 1 - \frac{t}{\vdim}H^\phi_\Omega(a)\bigg)^{\vdim} \ud t\ud \HM^{\vdim}(a) - R_1 - R_2 \notag\\
            &   \geq \sum_{\ell =0}^{\vdim}{n \choose \ell}\, \int_{\Sigma} \phi(\an{\Omega}{}(a))\, \int_{r}^{\max\{n/H^\phi_\Omega(a), r\}}  \bigg( 1 - \frac{t\, \lambda}{\vdim}\bigg)^\ell \, \frac{\big(\lambda - H^\phi_\Omega(a) \big)^{n- \ell}\, t^{n- \ell}}{n^{n-\ell}} \ud t  \ud \HM^{\vdim}(a)  \notag \\
            & \qquad   - (C_0 + C_1) \| H^\phi_\Omega - \lambda \|_{L^{\vdim}(\Bdry \Omega)}.
    \end{flalign}
    Recalling that $   H^\phi_\Omega(a) \geq \frac{\lambda}{2} $ for  $ a \in \Sigma $ and noting that 
    \begin{equation}\label{lemma estimates basic estimate 5}
	\Big| 1-\frac{t\,\lambda}{\vdim}\Big| \leq 1 \quad \textrm{for $ 0 \leq t \leq \frac{2n}{\lambda} $,}
    \end{equation}
    we estimate 
    \begin{flalign}\label{lemma estimates basic estimate 6}
      \Bigg| \int_{\Sigma} \phi(\an{\Omega}{}(a))\, & \int_{r}^{\max\{n/H^\phi_\Omega(a), r\}}  \bigg( 1  - \frac{t\, \lambda}{\vdim}\bigg)^\ell \, \frac{\big( \lambda - H^\phi_\Omega(a) \big)^{n- \ell}\, t^{n- \ell}}{n^{n-\ell}}\ud t\ud \HM^{\vdim}(a) \Bigg| \notag \\
        & \leq \int_{\Sigma} \phi(\an{\Omega}{}(a))\, \int_{0}^{\frac{2n}{\lambda}}  \bigg| 1  - \frac{t\, \lambda}{\vdim}\bigg|^\ell \,  \frac{\big| H^\phi_\Omega(a) -\lambda \big|^{n- \ell}\, t^{n- \ell}}{n^{n-\ell}}\ud t\ud \HM^{\vdim}(a)  \notag \\
        & \leq \frac{2^{n-\ell+1}\, n\, \gamma_\polar{\phi}}{\lambda^{n-\ell +1}} \, \int_{\Sigma} \big| H^\phi_\Omega(a)- \lambda \big|^{n-\ell}\ud \HM^{\vdim}(a) \notag \\
       &\leq \frac{2^{n-\ell+1}\, n\, \gamma_\polar{\phi}}{\lambda^{n-\ell +1}} \, \HM^{\vdim}(\Bdry \Omega)^{\frac{\ell}{\vdim}}\, \| H^\phi_\Omega- \lambda \|_{L^{\vdim}(\Bdry \Omega)}^{\frac{n-\ell}{\vdim}} 
    \end{flalign}
    for $ \ell = 0, \ldots , n-1 $.
    We also notice that
    \begin{flalign}\label{lemma estimates basic estimate 7}
      &	\int_{\Sigma} \phi(\an{\Omega}{}(a))\, \int_{r}^{\max\{n/H^\phi_\Omega(a), r\}}  \bigg( 1  - \frac{t\, \lambda}{\vdim}\bigg)^{\vdim} \ud t\ud \HM^{\vdim}(a) \\
      & \quad   = \int_{\Sigma} \phi(\an{\Omega}{}(a))\, \int_{r}^{\overline{r}}  \bigg( 1  - \frac{t\, \lambda}{\vdim}\bigg)^{\vdim} \ud t\ud \HM^{\vdim}(a)  \notag \\
      & \qquad \qquad  +  \int_{\Sigma \cap \{n/H^\phi_\Omega \geq \overline{r}\}} \phi(\an{\Omega}{}(a))\, \int_{\overline{r}}^{n/H^\phi_\Omega(a)}  \bigg( 1  - \frac{t\, \lambda}{\vdim}\bigg)^{\vdim} \ud t\ud \HM^{\vdim}(a) \notag \\
      & \qquad \qquad \qquad  -  \int_{\Sigma \cap \{n/H^\phi_\Omega< \overline{r}\}} \phi(\an{\Omega}{}(a))\, \int_{\max\{r, n/H^\phi_\Omega(a)\}}^{\overline{r}}  \bigg( 1  - \frac{t\, \lambda}{\vdim}\bigg)^{\vdim} \ud t\ud \HM^{\vdim}(a) \notag 
    \end{flalign}
    and, employing \eqref{lemma estimates basic estimate 1} and recalling that  $ \overline{r} = \frac{\vdim}{\lambda} $, we estimate 
    \begin{flalign}\label{lemma estimates basic estimate 19}
      \int_{\Sigma} \phi(\an{\Omega}{}(a))\, \int_{r}^{\overline{r}} &  \bigg( 1  - \frac{t\, \lambda}{\vdim}\bigg)^{\vdim}  \ud t\ud \HM^{\vdim}(a) \notag \\
                                                                                                             & = \frac{(\overline{r} -r)^{\adim}}{(n+1)\overline{r}^{\vdim}}\int_\Sigma \phi(\an{\Omega}{}(a))\ud \HM^{\vdim}(a) \\
                                                                                                             & \geq  \frac{(\overline{r} -r)^{\adim}}{(n+1)\overline{r}^{\vdim}}\, \perim{\phi}(\Omega) - \frac{n\, \gamma_\polar{\phi}}{(n+1)\,\lambda}\HM^{\vdim}(\Bdry \Omega \without \Sigma) \notag\\
                                                                                                             & \geq  \frac{(\overline{r} -r)^{\adim}}{(n+1)\overline{r}^{\vdim}}\, \perim{\phi}(\Omega) - \frac{2n\, \gamma_\polar{\phi}}{(n+1)\,\lambda^2}\HM^{\vdim}(\Bdry \Omega)^{\frac{n-1}{\vdim}}\| H^\phi_\Omega - \lambda \|_{L^{\vdim}(\Bdry \Omega)}. 
    \end{flalign}
    Moreover, using that $ H^\phi_\Omega(a) \geq \frac{\lambda}{2} $ for  $ a \in \Sigma $ and employing H\"older's inequality, we estimate
    \begin{flalign}\label{lemma estimates basic estimate 4}
      \int_{\Sigma \cap \{n/H^\phi_\Omega < \overline{r}\}} \phi(\an{\Omega}{}(a))\, & \int_{\max\{r, n/H^\phi_\Omega(a)\}}^{\overline{r}}  \bigg( 1  - \frac{t\, \lambda}{\vdim}\bigg)^{\vdim} \ud t\ud \HM^{\vdim}(a)  \notag \\
                                                                                     &  \leq \int_{\Sigma \cap \{n/H^\phi_\Omega< \overline{r}\}} \phi(\an{\Omega}{}(a)) \bigg( \overline{r} - \frac{\vdim}{H^\phi_\Omega(a)}\bigg)\ud \HM^{\vdim}(a)  \notag \\
                                                                                     & \qquad \leq \frac{2n\, \gamma_\polar{\phi}}{\lambda^2}\HM^{\vdim}(\Bdry \Omega)^{\frac{n-1}{\vdim}}\, \| H^\phi_\Omega - \lambda \|_{L^{\vdim}(\Bdry \Omega)} 
    \end{flalign}
    and, recalling \eqref{lemma estimates basic estimate 5}, 
    \begin{flalign}\label{lemma estimates basic estimate 8}
      \bigg| \int_{\Sigma \cap  \{n/H^\phi_\Omega \geq \overline{r}\}} &\phi(\an{\Omega}{}(a))\, \int_{\overline{r}}^{n/H^\phi_\Omega(a)}  \bigg( 1  - \frac{t\, \lambda}{\vdim}\bigg)^{\vdim} \ud t\ud \HM^{\vdim}(a) \bigg| \notag \\
      & \leq \int_{\Sigma \cap \{n/H^\phi_\Omega \geq \overline{r}\}} \,\phi(\an{\Omega}{}(a))\, \bigg( \frac{\vdim}{H^\phi_\Omega(a)} - \overline{r}\bigg)\ud \HM^{\vdim}(a) \notag\\
     & \qquad  \leq \frac{2n\,\gamma_\polar{\phi}}{\lambda^2}\, \HM^{\vdim}(\Bdry \Omega)^{\frac{n-1}{\vdim}}\, \| H^\phi_\Omega-\lambda \|_{L^{\vdim}(\Bdry \Omega)}. 
    \end{flalign}

We combine \eqref{cut locus}, \eqref{lemma estimates equality for eroded sets}, \eqref{lemma estimates basic estimate 12}, \eqref{lemma estimates basic estimate 6}, \eqref{lemma estimates basic estimate 19}, \eqref{lemma estimates basic estimate 7}, \eqref{lemma estimates basic estimate 4} and \eqref{lemma estimates basic estimate 8}, and we notice that $$ \| H^\phi_\Omega - \lambda \|^{\frac{n-\ell}{n}}_{L^{\vdim}(\Bdry \Omega)} \leq \| H^\phi_\Omega - \lambda \|^{\frac{1}{n}}_{L^{\vdim}(\Bdry \Omega)} \quad \textrm{$ \ell = 0, \ldots, n-1 $,} $$ to find a positive number $ C_2 $, that depends only on $ n $, $ \gamma_\phi $, $ \gamma^\circ_\phi $, $ R $, $ \lambda $ and $\HM^{\vdim}(\Bdry \Omega) $, such that 
    \begin{equation} \label{lemma estimates basic estimate 9}
        \bigl| \mathcal{E}^\phi_{\geq r}(\Omega) \bigr| = \bigl| F^\phi_K(\Gamma_{0,r}) \bigr| \geq  \frac{(\overline{r} -r)^{\adim}}{(n+1)\overline{r}^{\vdim}}\, \perim{\phi}(\Omega) - C_2\, \| H^\phi_\Omega - \lambda \|_{L^{\vdim}(\Bdry \Omega)}^{\frac{1}{\vdim}} 
    \end{equation}
    for $ 0 < r < \overline{r} $. Moreover, we define $ \Sigma_r = \Sigma \cap \{\tau \geq r\} $ and we employ \eqref{lemma estimates basic estimate 11},  \eqref{lemma estimates upper bound for tau}, \eqref{lemma estimates basic estimate 3}, \eqref{lemma estimates basic estimate 6} and \eqref{lemma estimates basic estimate 8} to find positive constants $ C_3 $ and $ C_4 $ depending on $ n $, $ \gamma_\phi $, $ \gamma^\circ_\phi $, $ R $, $ \lambda $ and $\HM^{\vdim}(\Bdry \Omega) $, such that
    \begin{flalign}\label{lemma estimates basic estimate 14}
      &\bigl| \mathcal{E}^\phi_{\geq r}(\Omega) \bigr| =  \bigl| F^\phi_K(\Gamma_{0,r}) \bigr|  \notag \\
      & \quad \leq \int_{\Sigma_r} \phi(\an{\Omega}{}(a))\, \int_{r}^{\frac{\vdim}{H^\phi_\Omega(a)}}  \bigg( 1 - \frac{t}{\vdim}H^\phi_\Omega(a)\bigg)^{\vdim} \ud t\ud \HM^{\vdim}(a) + C_1 \,  \| H^\phi_\Omega - \lambda \|_{L^{\vdim}(\Bdry \Omega)} \notag \\
      & \quad \leq \int_{\Sigma_r} \phi(\an{\Omega}{}(a))\, \int_{r}^{n/H^\phi_\Omega(a)}  \bigg( 1  - \frac{t\, \lambda}{\vdim}\bigg)^{\vdim} \ud t\ud \HM^{\vdim}(a)  \notag  \\
      & \qquad  +  \sum_{\ell =0}^{n-1} {n \choose \ell} \int_{\Sigma_r} \phi(\an{\Omega}{}(a))\, \int_{0}^{\frac{2n}{\lambda}}  \bigg| 1  - \frac{t\, \lambda}{\vdim}\bigg|^\ell \,  \frac{\big| H^\phi_\Omega(a) -\lambda \big|^{n- \ell}\, t^{n- \ell}}{n^{n-\ell}}\ud t\ud \HM^{\vdim}(a) \notag  \\
      & \qquad \quad  + C_1 \,  \| H^\phi_\Omega - \lambda \|_{L^{\vdim}(\Bdry \Omega)} \notag \\
      & \quad \leq  \int_{\Sigma_r} \phi(\an{\Omega}{}(a))\, \int_{r}^{\overline{r}}  \bigg( 1  - \frac{t\, \lambda}{\vdim}\bigg)^{\vdim} \ud t\ud \HM^{\vdim}(a)  \notag \\
      & \qquad  +  \int_{\Sigma_r \cap \{n/H^\phi_\Omega(a) \geq \overline{r}\}} \phi(\an{\Omega}{}(a))\, \int_{\overline{r}}^{n/H^\phi_\Omega(a)}  \bigg( 1  - \frac{t\, \lambda}{\vdim}\bigg)^{\vdim} \ud t\ud \HM^{\vdim}(a) \notag \\
      & \qquad \quad  -  \int_{\Sigma_r \cap \{n/H^\phi_\Omega(a) < \overline{r}\}} \phi(\an{\Omega}{}(a))\, \int_{ n/H^\phi_\Omega(a)}^{\overline{r}}  \bigg( 1  - \frac{t\, \lambda}{\vdim}\bigg)^{\vdim} \ud t\ud \HM^{\vdim}(a) \notag \\
      & \qquad \qquad  + C_3 \,  \| H^\phi_\Omega - \lambda \|_{L^{\vdim}(\Bdry \Omega)}^{\frac{1}{\vdim}} \notag \\
      & \quad \leq \int_{\Sigma_r} \phi(\an{\Omega}{}(a))\, \int_{r}^{\overline{r}}  \bigg( 1  - \frac{t\, \lambda}{\vdim}\bigg)^{\vdim} \ud t\ud \HM^{\vdim}(a) +  C_4 \,  \| H^\phi_\Omega - \lambda \|_{L^{\vdim}(\Bdry \Omega)}^{\frac{1}{\vdim}}  \notag \\
      & \quad = \frac{(\overline{r} -r)^{\adim}}{(n+1)\overline{r}^{\vdim}}\int_{\Sigma_r} \phi(\an{\Omega}{}(a))\ud \HM^{\vdim}(a)  +  C_4 \,  \| H^\phi_\Omega - \lambda \|_{L^{\vdim}(\Bdry \Omega)}^{\frac{1}{\vdim}}
    \end{flalign}
for $ 0 < r < \overline{r} $. Now we combine \eqref{lemma estimates basic estimate 14} and \eqref{lemma estimates basic estimate 9} to find conclude that 
    \begin{equation}\label{lemma estimates basic estimate 15}
        \int_{\partial \Omega \setminus \Sigma_r} \phi(\an{\Omega}{}(a))\ud \HM^{\vdim}(a)  \leq  \frac{(C_2 + C_4) \, (n+1)\,\overline{r}^{\vdim}}{(\overline{r} -r)^{\adim}}\, \,  \| H^\phi_\Omega - \lambda \|_{L^{\vdim}(\Bdry \Omega)}^{\frac{1}{\vdim}} \quad \textrm{for $ 0 < r < \overline{r} $.}
    \end{equation}

    Suppose now $ 0 < s < r < \overline{r} $. We  prove that there exists a positive constants $ C_5 $ depending on on $ n $, $ \gamma_\phi $, $ \gamma^\circ_\phi $, $ R $, $ \lambda $ and $\HM^{\vdim}(\Bdry \Omega) $ such that  
    \begin{flalign}\label{lemma estimates basic estimate 16}
    	\bigl| \mathcal{E}^\phi_{\geq r}(\Omega) + \wulff^\phi_s\bigr| 
    	 \geq \frac{(\overline{r} -(r-s))^{\adim}}{(n+1)\overline{r}^{\vdim}}\perim{\phi}(\Omega)  - \frac{C_5}{(\overline{r}-r)^{\adim}}\, \| H^\phi_\Omega - \lambda \|_{L^{\vdim}(\Bdry\Omega)}^{\frac{1}{\vdim}}.
    \end{flalign}  
  The proof of \eqref{lemma estimates basic estimate 16} proceeds similarly to \eqref{lemma estimates basic estimate 9},  additionally employing the key estimate \eqref{lemma estimates basic estimate 15}. Firstly, using \eqref{lemma estimates basic inclusion}, \eqref{lemma estimates basic estimate 11},  \eqref{lemma estimates basic estimate 13}, \eqref{lemma estimates upper bound for tau} and \eqref{lemma estimates basic estimate 10} we obtain
    \begin{flalign}
      	\bigl| \mathcal{E}^\phi_{\geq r}(\Omega) + & \wulff^\phi_s\bigr| \\
      & \geq 	\bigl| F^\phi_K(\Gamma_{s,r})\bigr| \\
      &    \geq  \int_{\Sigma_r} \phi(\an{\Omega}{}(a))\, \int_{r-s}^{\tau (a)} \prod_{i=1}^{\vdim}\big( 1 + t \kappa^\phi_{K,i}(a, -\an{\Omega}{\phi}(a))\big)\ud t\ud \HM^{\vdim}(a) \notag \\
      &  \geq \int_{\Sigma_r} \phi(\an{\Omega}{}(a))\, \int_{r-s}^{\tau(a)}  \bigg( 1 - \frac{t}{\vdim}H^\phi_\Omega(a)\bigg)^{\vdim} \ud t\ud \HM^{\vdim}(a) - R_1 \notag\\
      &   \geq \int_{\Sigma_r} \phi(\an{\Omega}{}(a))\, \int_{r-s}^{\frac{n}{H^\phi_\Omega(a)}}  \bigg( 1 - \frac{t}{\vdim}H^\phi_\Omega(a)\bigg)^{\vdim} \ud t\ud \HM^{\vdim}(a) - R_1 - R_2 \notag\\
      & \geq \sum_{\ell =0}^{\vdim}{n \choose \ell}\, \int_{\Sigma_r} \phi(\an{\Omega}{}(a))\, \int_{r-s}^{n/H^\phi_\Omega(a)}  \bigg( 1 - \frac{t\, \lambda}{\vdim}\bigg)^\ell \, \frac{\big(\lambda - H^\phi_\Omega(a) \big)^{n- \ell}\, t^{n- \ell}}{n^{n-\ell}} \ud t \ud \HM^{\vdim}(a)\\
      & \quad   - (C_0 + C_1) \| H^\phi_\Omega - \lambda \|_{L^{\vdim}(\Bdry \Omega)}
    \end{flalign}
and we estimate as in \eqref{lemma estimates basic estimate 6} to find that 
\begin{flalign*}
\bigg|  \int_{\Sigma_r} \phi(\an{\Omega}{}(a))\, \int_{r-s}^{n/H^\phi_\Omega(a)}  \bigg( 1 - \frac{t\, \lambda}{\vdim}\bigg)^\ell \, & \frac{\big(\lambda - H^\phi_\Omega(a) \big)^{n- \ell}\, t^{n- \ell}}{n^{n-\ell}}  \ud t \ud \HM^{\vdim}(a) \bigg| \\
 &\leq \frac{2^{n-\ell+1}\, n\, \gamma_\polar{\phi}}{\lambda^{n-\ell +1}} \, \HM^{\vdim}(\Bdry \Omega)^{\frac{\ell}{\vdim}}\, \| H^\phi_\Omega- \lambda \|_{L^{\vdim}(\Bdry \Omega)}^{\frac{n-\ell}{\vdim}} 
\end{flalign*}
for $ \ell = 0, \ldots , n-1 $. Additionally, we notice that 
     \begin{flalign}\label{lemma estimates basic estimate 7'}
    	&	\int_{\Sigma_r} \phi(\an{\Omega}{}(a))\, \int_{r-s}^{n/H^\phi_\Omega(a)}  \bigg( 1  - \frac{t\, \lambda}{\vdim}\bigg)^{\vdim} \ud t\ud \HM^{\vdim}(a) \\
    	& \quad   = \int_{\Sigma_r} \phi(\an{\Omega}{}(a))\, \int_{r-s}^{\overline{r}}  \bigg( 1  - \frac{t\, \lambda}{\vdim}\bigg)^{\vdim} \ud t\ud \HM^{\vdim}(a)  \notag \\
    	& \qquad \qquad  +  \int_{\Sigma_r \cap \{n/H^\phi_\Omega \geq \overline{r}\}} \phi(\an{\Omega}{}(a))\, \int_{\overline{r}}^{n/H^\phi_\Omega(a)}  \bigg( 1  - \frac{t\, \lambda}{\vdim}\bigg)^{\vdim} \ud t\ud \HM^{\vdim}(a) \notag \\
    	& \qquad \qquad \qquad  -  \int_{\Sigma \cap \{n/H^\phi_\Omega < \overline{r}\}} \phi(\an{\Omega}{}(a))\, \int_{n/H^\phi_\Omega(a)}^{\overline{r}}  \bigg( 1  - \frac{t\, \lambda}{\vdim}\bigg)^{\vdim} \ud t\ud \HM^{\vdim}(a) \notag 
    \end{flalign}
and we apply \eqref{lemma estimates basic estimate 15} to conclude
\begin{flalign*}
  \int_{\Sigma_r} \phi(\an{\Omega}{}(a))\, \int_{r-s}^{\overline{r}} &  \bigg( 1  - \frac{t\, \lambda}{\vdim}\bigg)^{\vdim}  \ud t\ud \HM^{\vdim}(a) \\
& = \frac{(\overline{r} -(r-s))^{\adim}}{(n+1)\overline{r}^{\vdim}}\int_{\Sigma_r} \phi(\an{\Omega}{}(a))\ud \HM^{\vdim}(a) \\
& \geq  \frac{(\overline{r} -(r-s))^{\adim}}{(n+1)\overline{r}^{\vdim}}\, \perim{\phi}(\Omega) - (C_2 + C_4)\,\frac{(\overline{r} - (r-s))^{n+1}}{(\overline{r}-r)^{n+1}}\, \| H^\phi_\Omega - \lambda \|_{L^{\vdim}(\Bdry \Omega)}^{\frac{1}{\vdim}}\\
& \geq  \frac{(\overline{r} -(r-s))^{\adim}}{(n+1)\overline{r}^{\vdim}}\, \perim{\phi}(\Omega) - (C_2 + C_4)\,\frac{\overline{r}^{n+1}}{(\overline{r}-r)^{n+1}}\, \| H^\phi_\Omega - \lambda \|_{L^{\vdim}(\Bdry \Omega)}^{\frac{1}{\vdim}}.
\end{flalign*}
Finally we estimate as in \eqref{lemma estimates basic estimate 4} and \eqref{lemma estimates basic estimate 8} to find that
\begin{flalign*}
	 \int_{\Sigma_r \cap \{n/H^\phi_\Omega < \overline{r}\}} \phi(\an{\Omega}{}(a))\, \int_{n/H^\phi_\Omega(a)}^{\overline{r}}  \bigg( 1  - \frac{t\, \lambda}{\vdim}\bigg)^{\vdim} &\ud t\ud \HM^{\vdim}(a) \\
	 & \leq \frac{2n\, \gamma_\polar{\phi}}{\lambda^2}\HM^{\vdim}(\Bdry \Omega)^{\frac{n-1}{\vdim}}\, \| H^\phi_\Omega - \lambda \|_{L^{\vdim}(\Bdry \Omega)}
	 \end{flalign*}
 and
 \begin{flalign*}
 \bigg| \int_{\Sigma_r \cap \{n/H^\phi_\Omega \geq \overline{r}\}} \phi(\an{\Omega}{}(a))\, \int_{\overline{r}}^{n/H^\phi_\Omega(a)}  \bigg( 1  - \frac{t\, \lambda}{\vdim}\bigg)^{\vdim} & \ud t\ud \HM^{\vdim}(a)  \bigg| \\
 &  \leq \frac{2n\, \gamma_\polar{\phi}}{\lambda^2}\HM^{\vdim}(\Bdry \Omega)^{\frac{n-1}{\vdim}}\, \| H^\phi_\Omega - \lambda \|_{L^{\vdim}(\Bdry \Omega)}.
 \end{flalign*}
We combine the estimates above to obtain \eqref{lemma estimates basic estimate 16}.

    Finally, if $ 0 < s < r < \overline{r} $,  noting that $ \mathcal{E}^\phi_{\geq r}(\Omega) + \wulff^\phi_s \subseteq \mathcal{E}^\phi_{\geq r-s}(\Omega) $, we apply \eqref{lemma estimates basic estimate 14} with $ r $ replaced by $ r-s $ to infer that 
    \begin{flalign}\label{lemma estimates basic estimate 17}
      \bigl| \mathcal{E}^\phi_{\geq r}(\Omega) + \wulff^\phi_s \bigr| & \leq \bigl|\mathcal{E}^\phi_{\geq r-s}(\Omega) \bigr| \\
                                                                           & \leq \frac{(\overline{r} -(r-s))^{\adim}}{(n+1)\overline{r}^{\vdim}}\perim{\phi}(\Omega)  +  C_4 \,  \| H^\phi_\Omega - \lambda \|_{L^{\vdim}(\Bdry \Omega)}^{\frac{1}{\vdim}}.
    \end{flalign} 
    Combining \eqref{lemma estimates basic estimate 9}, \eqref{lemma estimates basic estimate 14}, \eqref{lemma estimates basic estimate 16} and \eqref{lemma estimates basic estimate 17} we conclude that there exists a positive constant $ C_6 $ depending on on $ n $, $ \gamma_\phi $, $ \gamma^\circ_\phi $, $ R $ such that 
    \begin{displaymath}
        \bigg| \big|\mathcal{E}^\phi_{\geq r}(\Omega)\big|- \frac{\perim{\phi}(\Omega)}{(n+1)\,\overline{r}^{\vdim}} (\overline{r} - r)^{\adim} \bigg| \leq C_6 \| H^\phi_\Omega - \lambda \|_{L^{\vdim}(\Bdry \Omega)}^{\frac{1}{\vdim}} 
    \end{displaymath}
    and 
    \begin{displaymath}
        \bigg| \big|\mathcal{E}^\phi_{\geq r}(\Omega) + \wulff^\phi_s\big| - \frac{\perim{\phi}(\Omega)}{(n+1)\,\overline{r}^{\vdim}} (\overline{r} - (r-s))^{\adim} \bigg| \leq \frac{C_6}{(\overline{r} - r)^{\adim}} \| H^\phi_\Omega - \lambda \|_{L^{\vdim}(\Bdry \Omega)}^{\frac{1}{\vdim}}, 
    \end{displaymath}
    whenever $ 0 < s < r < \overline{r} $. 
    
    Finally, we recall \eqref{lemma estimates basic estimate 18} to conclude the proof.
\end{proof}

\begin{remark}\label{rmk uniform estimates on converging norms}
    Suppose $ \{\phi_h  \}_{h \geq 1} $ is a sequence of norms on $ \R^{\adim} $ converging pointwise to a norm $ \phi $ and $ e_1, \ldots , e_{n+1} $ is an orthonormal basis of $ \R^{\adim} $. Since $ \phi_h(v)  \leq | v | \cdot \textsum{i=1}{n+1} \phi_h(e_i) $ for each $ v \in \R^{\adim} $ and $ h \geq 1 $, we define $ C = \sup \bigl\{ \textsum{i=1}{n+1} \phi_h(e_i) : h \geq 1\bigr\} $ we see that
    \begin{displaymath}
       C < \infty \quad \textrm{and} \quad  \phi_h(v)  \leq C | v | \quad \textrm{for each $ v \in \R^{\adim} $ and $ h \geq 1 $}. 
    \end{displaymath}
    Henceforth, $ \Lip (\phi_h) \leq C $ for each $ h \geq 1 $ and $ \phi_h $ converge uniformly on each compact subset of $ \R^{\adim} $ to $ \phi $ (cf.\ \cite[2.10.21]{Federer1969}).
    We deduce that there exist $ 0 <  C' < \infty $ such that 
    \begin{equation*}
        C'| v | \leq \phi_h(v) \leq C | v |\quad \textrm{for $ v \in \R^{\adim} $ and  $ h \geq 1$;} 
    \end{equation*} 
    in particular, if $ E \subseteq \R^{\adim} $ is a set of finite perimeter we obtain
    \begin{displaymath}
        C' \HM^{\vdim}(\redbd E) \leq \perim{\phi_h}(E) \leq C \HM^{\vdim}(\redbd E) \quad \textrm{for every $ h \geq 1 $.} 
    \end{displaymath}
    
    We also observe that the sequence of conjugate norms $ \{\polar{\phi}_h\}_{h \geq 1} $ uniformly converges on each compact subset of $ \R^{\adim} $ to $ \polar{\phi} $. Clearly, it is enough to check that $ \phi_h^\circ \to \polar{\phi} $ pointwise on $ \R^{\adim} $. The latter holds since whenever $ f : \R^{\adim} \rightarrow \R $ is a continuous function, the uniform converge on compact subsets of $ \phi_h $ to $ \phi $ implies that for every $ \epsilon > 0 $ there exists $ h_\epsilon \geq 1 $ so that 
    $$ \wulff_{1-\epsilon}^{\phi^\circ} \subseteq \wulff^{\phi_h^\circ}_1 \subseteq \wulff^{\phi^\circ}_{1+\epsilon} \quad \textrm{for $ h \geq h_\epsilon $} $$
    and 
 \begin{displaymath}
        {\textstyle \sup_{\wulff^{\phi^\circ}_{1-\epsilon}} f } \leq \liminf_{h \to \infty} \big({\textstyle\sup_{\wulff^{\phi_h^\circ}_{1}} f}\big) \leq \limsup_{h \to \infty} \big({\textstyle\sup_{\wulff^{\phi_h^\circ}_{1}} f} \big)\leq {\textstyle\sup_{\wulff^{\phi^\circ}_{1+\epsilon}} f}. 
    \end{displaymath}
   Letting $ \epsilon \to 0 $ we see that $$ \lim_{h \to \infty} {\textstyle\sup_{\wulff^{\phi_h^\circ}_{1}} f = \sup_{\wulff^{\phi^\circ}_{1}} f}. $$
 In particular, we conclude that there exists $ 0 < C_1 \leq C_2 < \infty $ such that 
    \begin{displaymath}
        C_1| v | \leq \polar{\phi}_h(v) \leq C_2 | v |\quad \textrm{for $ v \in \R^{\adim} $ and  $ h \geq 1$.} 
    \end{displaymath}
\end{remark}

We are now ready to prove the compactness theorem.

\begin{theorem}\label{thm compactness}
    Suppose $ \phi $ is an arbitrary norm of $ \R^{n+1} $ and $\{\phi_h\}_{h \geq 1} $ is a sequence of uniformly convex $ \cnt{3} $-norms of $ \R^{\adim} $ pointwise converging to $ \phi $. Suppose $ \{E_h\}_{h \geq 1}  $ is a sequence of sets of finite perimeter in $ \R^{n+1} $ such that
    \begin{equation*}
     \HM^n(\overline{\partial^\ast E_h} \setminus \partial^\ast E_h) =0\,, \qquad 	\sup_{h \geq 1} \bigl( \diam (E_h) + \HM^{\vdim}(\partial^\ast E_h)\bigr) < \infty\,, 
    \end{equation*}
     there exists $ 0 < \kappa_h < \infty $  with 
    \begin{equation*}
    	\| \delta \perim{\phi_h}(E_h) \| \leq \kappa_h \, \bigl( \HM^{\vdim} \restrict \redbd E_h \bigr)
    \end{equation*}
and
\begin{equation*}
        \| H^{\phi_h}_{E_h} - \lambda \|_{L^{\vdim}(\partial^\ast E_h)} \rightarrow 0 \quad \textrm{for some $ \lambda > 0 $ as $ h \to \infty $.} 
    \end{equation*}

  Set $ \overline{r} = \tfrac{n}{\lambda} $. Then there exists $ C \subseteq \R^{\adim} $ with $ 0 \leq {\rm card}(C) < \infty $ and $ \polar{\phi}(c-d) \geq \frac{2n}{\lambda} $ for every $ c ,d \in C $, such that if $ E = \bigcup_{c \in C} \big(c + \wulff^\phi_{\overline{r}}\big)  $ then, up to  translations and up to extracting subsequences,
    \begin{displaymath}
 \lim_{h \to \infty}     | E_{h} \symdiff E| =0 \quad \textrm{and} \quad \lim_{h \to \infty}\mathcal{P}_{\phi_h}(E_h) = {\rm card}(C)\, \mathcal{P}_\phi(\wulff^\phi_{\overline{r}}).
    \end{displaymath}
\end{theorem}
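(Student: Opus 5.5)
Replacing each $E_h$ by the open set $\Omega_h$ from Theorem \ref{regularity theorem} (which changes neither the measure class nor the perimeter or curvature data), I would first use Remark \ref{rmk uniform estimates on converging norms} to get uniform constants: $\gamma_{\phi_h},\gamma^\circ_{\phi_h}$ bounded, $\HM^n(\partial\Omega_h)=\HM^n(\partial^\ast E_h)$ bounded, and, after translation, $\Omega_h\subseteq B_R$ for a fixed $R$ by the diameter bound. For $h$ large, $\|H^{\phi_h}_{\Omega_h}-\lambda\|_{L^n}\le 1$, so Lemma \ref{lemma estimates} applies with one constant $C$ for all $h$. Set $\epsilon_h=\|H^{\phi_h}_{\Omega_h}-\lambda\|^{1/n}_{L^n}\to 0$. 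By \eqref{lemma estimates basic estimate 18}, $n\perim{\phi_h}(\Omega_h)-\lambda(n+1)|\Omega_h|\to0$. By BV compactness, after extracting a subsequence, $\Omega_h\to E$ in $L^1$ and $|\Omega_h|\to|E|$.

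Next I would pass the eroded sets to the limit. The sets $\mathcal{E}_h(r)=\mathcal{E}^{\phi_h}_{\ge r}(\Omega_h)$ are compact and contained in $B_R$, so by Blaschke a further diagonal subsequence converges in Hausdorff distance, for rational $r\in(0,\overline r)$, to compact sets $K_r$. The uniform convergence $\phi_h^\circ\to\phi^\circ$ gives $\wulff^{\phi_h}_s\to\wulff^\phi_s$ in Hausdorff distance. Upper semicontinuity of Lebesgue measure under Hausdorff convergence, together with the third estimate of Lemma \ref{lemma estimates}, yields
\[
|K_r+\wulff^\phi_s|\ \ge\ \limsup_h|\mathcal{E}_h(r)+\wulff^{\phi_h}_s|=\tfrac{|E|}{\overline r^{n+1}}(\overline r-r+s)^{n+1}.
\]
A matching upper bound should follow from $\mathcal{E}_h(r)+\wulff^{\phi_h}_s\subseteq\mathcal{E}_h(r-s)$, the first estimate of the lemma, and a small fattening argument. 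Letting $s\uparrow r$, one expects $K_r+\wulff^\phi_r$ to have measure $|E|$ and to be essentially contained in $E$; the containment comes from $\mathcal{E}_h(r)+\wulff^{\phi_h}_r\subseteq\overline{\Omega_h}$, and $|\partial\Omega_h|=0$ follows from the bound on $\HM^n(\partial\Omega_h)$.

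The key rigidity step is to let $r\uparrow\overline r$. Then $|K_r|\to0$, and for fixed $r$ the function $s\mapsto|K_r+\wulff^\phi_s|$ equals $\frac{|E|}{\overline r^{n+1}}(\overline r-r+s)^{n+1}$. This is a polynomial whose leading coefficient $|E|/\overline r^{n+1}$ equals $|\wulff^\phi|\cdot\mathrm{card}$, and by Steiner/Minkowski mixed-volume theory its lower-order coefficients encode quermassintegrals of $K_r$. I would take limits $K_r\to K_{\overline r}$ in Hausdorff distance (a subsequence in $r$). Then $|K_{\overline r}+\wulff^\phi_s|=\frac{|E|}{\overline r^{n+1}}s^{n+1}$ for all $s$, and an exact $(n+1)$-homogeneity of this kind forces $K_{\overline r}$ to be a finite set $C$ with $|C+\wulff^\phi_s|=\mathrm{card}(C)\,|\wulff^\phi_s|$ for $s\le\overline r$. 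That identity means the translates $c+\wulff^\phi_{\overline r}$ have pairwise disjoint interiors, i.e.\ $\phi^\circ(c-d)\ge2\overline r$. Proving finiteness is the main obstacle. I would argue it as follows: for small $s$, $|K+\wulff_s|\ge N$ disjoint small Wulff shapes whenever $K$ contains $N$ points at mutual distance greater than $2s$. This gives $\mathrm{card}\le |E|/(\overline r^{n+1}|\wulff^\phi|)$.

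Finally I would show $E=\bigcup_{c\in C}(c+\wulff^\phi_{\overline r})$ up to null sets. One inclusion comes from $K_r+\wulff_r\subseteq E$ in the limit, together with the measures matching $|E|=\mathrm{card}(C)\,|\wulff^\phi_{\overline r}|$. Then $\perim{\phi_h}(\Omega_h)\to\frac{\lambda(n+1)}{n}|E|=\mathrm{card}(C)\,\perim{\phi}(\wulff^\phi_{\overline r})$, by \eqref{eq basic wulff shapes 2} and scaling.
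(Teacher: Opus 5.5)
Your proposal is correct and has the same skeleton as the paper's proof. That skeleton is: Lemma \ref{lemma estimates} $\Rightarrow$ exact limit identities for the eroded sets and their Wulff fattenings, then $r\uparrow\overline r$, then packing of small Wulff shapes to get finiteness and the cardinality, then a measure comparison. You pass to the limit with a different compactness device, Blaschke selection on the eroded sets.

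The paper instead applies Arzel\`a--Ascoli to the distance functions $\delta^{\phi_h}_{\Omega_h}$ and shows the limit is $\delta^\phi_P$ with $P=\{f>0\}$. Uniform convergence then gives directly, for every $r$ at once, the two-sided sandwich $\mathcal{E}^{\phi_h}_{\geq r+\epsilon}(\Omega_h) \subseteq \mathcal{E}^\phi_{\geq r}(P) \subseteq \mathcal{E}^{\phi_h}_{\geq r-\epsilon}(\Omega_h)$, and its analogue with $\wulff^\phi_s$ added. So the following all come from a single limit set:
\begin{itemize}
\item both inequalities;
\item monotonicity in $r$;
\item the passage $r\uparrow\overline r$ via $\bigcap_{r<\overline r}\mathcal{E}^\phi_{\geq r}(P)=\mathcal{E}^\phi_{\geq\overline r}(P)$;
\item the identity $|P\symdiff\Omega|=0$.
\end{itemize}

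Your route avoids identifying a limit distance function, but it costs more:
\begin{itemize}
\item a diagonal extraction over rational $r$;
\item upper semicontinuity of Lebesgue measure under Hausdorff convergence for one inequality;
\item a fattening argument through $\mathcal{E}_h(r)+\wulff^{\phi_h}_{s'}\subseteq\mathcal{E}_h(r-s')$ with $s'\downarrow s$, together with the first estimate of the lemma, for the other inequality. This does work.
\end{itemize}

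A few points to tighten:
\begin{itemize}
\item No subsequence in $r$ is needed. Hausdorff limits preserve inclusions, so $K_r$ decreases in $r$ and $K_{\overline r}=\bigcap_r K_r\subseteq K_r$. This inclusion is exactly what gives $C+{\rm int}\,\wulff^\phi_r\subseteq E$ a.e.\ at the end.
\item The identity $|K_{\overline r}+\wulff^\phi_s|=\frac{|E|}{\overline r^{n+1}}s^{n+1}$ holds only for $0<s<\overline r$, not for all $s$.
\item The a.e.\ containment in $E$ should be stated for $K_r+{\rm int}\,\wulff^\phi_r$. The closed containment $\mathcal{E}_h(r)+\wulff^{\phi_h}_r\subseteq\overline{\Omega_h}$ does not pass to the limit pointwise. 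By contrast, every point of $K_r+{\rm int}\,\wulff^\phi_r$ lies in $\Omega_h$ for all large $h$. The open version suffices because $C$ is finite.
\item The Steiner/quermassintegral remark plays no role, and it does not apply anyway, since $K_r$ need not be convex.
\end{itemize}
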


\begin{proof}
By Theorem \ref{regularity theorem} we can replace each set $ E_h $ with an open set $ \Omega_h $ satisfying the conclusion \ref{regularity theorem 1}-\ref{regularity theorem 4} of Theorem \ref{regularity theorem}. If $ R = \sup_{h \geq 1} \diam(\Omega_h) < \infty $ and $ p_h \in \Omega_h $ for each $ h \geq 1 $, then $ \Omega_h - p_h \subseteq B_R $  for every $ h \geq 1 $. Henceforth  we can assume, up to translations, that 
$$ \Omega_h \subseteq B_R \quad \textrm{for each $ h \geq 1 $.} $$
 and we deduce from compactness theorem for sets of finite perimeter that there exists a set of finite perimeter $ \Omega \subseteq B_R $ such that, up to subsequences, 
    \begin{displaymath}
        | \Omega_h \symdiff \Omega | \to 0 \quad \textrm{as $ h \to \infty $.} 
    \end{displaymath}
By Remark \ref{rmk uniform estimates on converging norms}, $ \phi_h^\circ $ uniformly converges to $ \phi^\circ $ on compact sets as $ h \to \infty $,
$$ \gamma : = \sup_{h \geq 1}\,\sup\,\bigr\{\gamma_{\phi_h}^\circ, \gamma_{\phi_h}, \HM^n(\partial \Omega_h)\bigl\} < \infty\,, $$
 and there exist $ 0 < C_1 \leq C_2 < \infty $ and $ \gamma > 0 $ such that 
\begin{equation}\label{thm compactness volume 4}
	C_1| v | \leq \polar{\phi}_h(v) \leq C_2 | v |\quad \textrm{for $ v \in \R^{\adim} $ and  $ h \geq 1$} 
\end{equation}
We deduce that
    \begin{displaymath}
        \sup_{p \in \R^{\adim}} \delta^{\phi_h}_{\Omega_h}(p)  \leq 2 C_2 R \quad \textrm{and}  \quad \Lip \big(\delta^{\phi_h}_{\Omega_h}\big) \leq C_2   \quad \textrm{for $ h \geq 1 $.}
    \end{displaymath}
    Henceforth, we can apply Ascoli-Arzela theorem (cf.\ \cite[2.10.21]{Federer1969}) to conclude that there exists a nonnegative function $ f : \R^{\adim} \rightarrow \R $ with $ \Lip(f) \leq C_2 $ such that, up to subsequences, $ \delta^{\phi_{h}}_{\Omega_{h}} $ converges uniformly on each compact subset of $ \R^{\adim} $ to $ f $.  

    We define $ P = \{f >0\} $  and we notice that $ P \subseteq B_R $.  We claim that $ f = \delta^\phi_P $ (we do not exclude that $ P $ might be empty, in which case $ \delta^\phi_P = \dist^\phi_{\R^{n+1}} $ is identically zero). We fix  $ x \in \R^{\adim} $ and  we choose $ a_h \in \R^{\adim} \without \Omega_{h} $ such that $ \polar{\phi}_h(x-a_h) = \delta^{\phi_{h}}_{\Omega_{h}}(x)  $. Since  the sequence $ \{a_h\}_{h \geq 1} $ is bounded, there exists $ a \in \R^{\adim} $ such that $ a_h \to a $ up to subsequences. Moreover, noting that $ \delta^{\phi_{h}}_{\Omega_{h}}(a_h) =0 $ for each $ h \geq 1 $, we have that
    \begin{displaymath}
        f(a_h) = f(a_h) - \delta^{\phi_{h}}_{\Omega_{h}} (a_h) \to 0  
    \end{displaymath}
    hence $ f(a) =0 $ and $  a\in \R^{\adim} \without P $. We deduce from the uniform converge on compact subsets of $\phi_h^\circ $ to $ \phi^\circ $ that
    \begin{displaymath}
        \delta^\phi_P(x) \leq \polar{\phi}(x-a) = \lim_{h \to \infty}\phi_h^\circ(x-a_h) = \lim_{h \to \infty}\delta^{\phi_{h}}_{\Omega_{h}} (x) = f(x). 
    \end{displaymath}
    To prove the opposite inequality we choose $ x \in \R^{\adim} $ and $ a \in \R^{\adim} \without P $ so that $ \polar{\phi}(x-a) = \delta^\phi_P(x) $. For each $ h \geq 1 $ we choose $ a_h \in \R^{\adim} \without \Omega_h $ so that $ \phi_h^\circ(a_h-a) = \delta^{\phi_{h}}_{\Omega_{h}}(a) $ and we notice that $ \polar{\phi}_h(a_h -a) \to f(a) =0 $. It follows from \eqref{thm compactness volume 4} that $ a_h \to a $ and $ \phi_h^\circ(x-a_h) \to \polar{\phi}(x-a) $ as $ h \to \infty $. Noting that $ \delta^{\phi_{h}}_{\Omega_{h}}(x) \leq \phi_h^\circ(x-a_h) $ for each $ h \geq 1 $, we conclude that $ f(x) \leq \delta^\phi_P(x) $.

    Fix $ \overline{r} = \frac{\vdim}{\lambda} $ and  $ 0 < s < r < \overline{r} $. We choose $ \epsilon $ so that $$ 0 <  s - \epsilon, \quad  s + \epsilon < r -\epsilon, \quad r+\epsilon < \overline{r} $$  and we find $ h(\epsilon) \geq 1 $ so that 
    \begin{displaymath}
        \| \delta^{\phi_h}_{\Omega_h} - \delta^\phi_P \|_{L^\infty(B_R)} \leq \epsilon \quad \textrm{and} \quad \| \phi_h^\circ - \polar{\phi} \|_{L^\infty(B_{r/C_1})} \leq \epsilon \quad \textrm{for every $ h \geq h(\epsilon) $.}
    \end{displaymath}  
If necessary we choose $ h(\epsilon) $ larger so that $ \| H^{\phi_h}_{E_h} - \lambda \|_{L^{\vdim}(\partial^\ast E_h)} \leq 1 $ for every $ h \geq h(\epsilon) $.
Since $ \wulff^{\phi_h}_t \cup \wulff^\phi_t \subseteq B_{t/C_1} $ for $ t > 0 $ and $ h \geq 1 $, it follows that
    \begin{equation*}
	 \mathcal{E}^{\phi_h}_{\geq r+\epsilon}(\Omega_h) \subseteq \mathcal{E}^\phi_{\geq r}(P) \subseteq \mathcal{E}^{\phi_h}_{\geq r-\epsilon}(\Omega_h)  
    \end{equation*} 
    \begin{equation*}
   \mathcal{E}^{\phi_h}_{\geq r +\epsilon}(\Omega_h) + \wulff^{\phi_h}_{s - \epsilon} \subseteq      \mathcal{E}^\phi_{\geq r}(P) + \wulff^{\phi}_{s} \subseteq \mathcal{E}^{\phi_h}_{\geq r-\epsilon}(\Omega_h) + \wulff^{\phi_h}_{s + \epsilon}
    \end{equation*}
    for every $ h \geq h(\epsilon) $. Since $ 	\mathcal{E}^\phi_{\geq r}(P)  \without \Omega \subseteq \mathcal{E}^{\phi_h}_{\geq r-\epsilon}(\Omega_h) \without \Omega \subseteq \Omega_h \without \Omega $ for every $ h \geq h(\epsilon) $, we see that \begin{equation}\label{thm compactness volume 5}
         \big| \mathcal{E}^\phi_{\geq r}(P)  \without \Omega \big| =0.
    \end{equation} 
Define $ \delta_h = \| H^{\phi_h}_{\Omega_h} - \lambda \|_{L^{\vdim}(\Bdry \Omega_h)}^{\frac{1}{n}} $ and we employ Lemma \ref{lemma estimates} to find a constant $ C > 0 $ depending on $ \overline{r} $, $ R $ and $ \gamma $ such that
    \begin{displaymath}
        \frac{\big|\Omega_h\big|}{\overline{r}^{\adim}} (\overline{r} - (r + \epsilon))^{\adim}  - C \delta_h \leq \big| \mathcal{E}^\phi_{\geq r}(P) \big| \leq  \frac{\big|\Omega_h\big|}{\overline{r}^{\adim}} (\overline{r} - (r- \epsilon))^{\adim}  + C \delta_h 
    \end{displaymath}
 \begin{displaymath}
	\frac{\mathcal{P}_{\phi_h}(\Omega_h)}{(n+1)\overline{r}^{n}}\, (\overline{r} - (r + \epsilon))^{\adim}  - C \delta_h \leq \big| \mathcal{E}^\phi_{\geq r}(P) \big| \leq  \frac{\mathcal{P}_{\phi_h}(\Omega_h)}{(n+1)\overline{r}^{n}}\, (\overline{r} - (r - \epsilon))^{\adim}  + C \delta_h 
\end{displaymath}
    and 
    \begin{flalign*}
      \frac{\big|\Omega_h\big|}{\overline{r}^{\adim}} (\overline{r} - (r-s+2\epsilon) )^{\adim} - & \frac{C\delta_h}{(\overline{r}  - (r+\epsilon))^{\adim}} \\
       & \leq \big| \mathcal{E}^{\phi_h}_{\geq r+\epsilon}(\Omega_h) + \wulff^{\phi_h}_{s-\epsilon} \big| \\
      & \leq \big| \mathcal{E}^\phi_{\geq r}(P) + \wulff^{\phi}_{s} \big| \\
       & \leq \big| \mathcal{E}^{\phi_h}_{\geq r-\epsilon}(\Omega_h) + \wulff^{\phi_h}_{s+\epsilon} \big| \\
       & \leq \frac{\big|\Omega_h\big|}{\overline{r}^{\adim}} (\overline{r} - (r-s-2\epsilon) )^{\adim}  + \frac{C\delta_h}{(\overline{r} - (r-\epsilon))^{\adim}} 
    \end{flalign*}
    for every $ h \geq h(\epsilon) $. Letting $ h \to \infty $ and then $ \epsilon \to 0 $ we obtain 
    \begin{equation}\label{thm compactness volume 1}
\frac{(\overline{r}-r)^{n+1}}{(n+1)\, \overline{r}^n}\, \bigl( \lim_{h \to \infty}\mathcal{P}_{\phi_h}(\Omega_h)\bigr)  =  \big| \mathcal{E}^\phi_{\geq r}(P) \big| =  \frac{\big|\Omega\big|}{\overline{r}^{\adim}} (\overline{r} - r)^{\adim} 
    \end{equation}  
    and
    \begin{equation}\label{thm compactness volume 2}
        \big| \mathcal{E}^\phi_{\geq r}(P) + \wulff^{\phi}_{s} \big| =   \frac{\big|\Omega\big|}{\overline{r}^{\adim}} (\overline{r} - (r-s) )^{\adim} 
    \end{equation} 
    for every $ 0 < s < r < \overline{r} $.
    
     Letting $ r \to 0 $ in \eqref{thm compactness volume 5} and \eqref{thm compactness volume 1}  we find that 
     \begin{equation}\label{thm compactness volume 8}
| P \setminus \Omega | =0 \quad \textrm{and}    \quad   \frac{\overline{r}}{(n+1)}\, \lim_{h \to \infty}\mathcal{P}_{\phi_h}(\Omega_h)  = | P | = | \Omega |.
     \end{equation}
 In particular, $ | P \triangle \Omega | =0 $.
Moreover, letting $ r \to \overline{r} $ in \eqref{thm compactness volume 2} we obtain 
    \begin{equation}\label{thm compactness volume 3}
        \big| \mathcal{E}^\phi_{\geq \overline{r}}(P) + \wulff^\phi_{s}\big| = \frac{|P|}{\overline{r}^{\adim}}s^{\adim} \quad \textrm{for $ 0 < s < \overline{r}$}
    \end{equation}
and letting $ s \to \overline{r} $ in \eqref{thm compactness volume 3} we deduce that 
\begin{equation}\label{thm compactness volume 6}
	\big| \mathcal{E}^\phi_{\geq \overline{r}}(P) + {\rm int}\,\wulff^\phi_{\overline{r}}\big| = |P|.
\end{equation}
   
    Now we choose a positive integer $ N \geq 1 $ such that \begin{equation}\label{thm compactness volume 7}
        (N-1) \leq \frac{| P |}{\overline{r}^{\adim}| \wulff^\phi_1 |} < N
    \end{equation}
    we claim that 
    \begin{center}
     \emph{$ \mathcal{E}^\phi_{\geq \overline{r}}(P) $ contains precisely $ N-1 $ points.} 
    \end{center}
   To prove the claim we observe that if $ B $ is an open $ \polar{\phi} $-ball of radius $ \overline{r} $, then $ B \cap \mathcal{E}^\phi_{\geq \overline{r}}(P) $ contains at most $ N-1 $ points:  indeed, if $ x_1, \ldots , x_k \in B \cap \mathcal{E}^\phi_{\geq \overline{r}}(P) $ such that $ \polar{\phi}(x_i - x_j) > 0 $ for $ i \neq j $, then we choose $ s $ so that
    \begin{displaymath}
        0 < s < \inf\bigg\{\frac{\polar{\phi}(x_i - x_j)}{2} : i, j = 1, \ldots , k, \; i \neq j  \bigg\} 
    \end{displaymath}
    and, noting that $ s < \overline{r} $, we infer that 
    \begin{displaymath}
        k \big| \wulff^\phi_1\big| s^{\adim} = \bigg| \bigcup_{i=1}^{k} x_i + \wulff^\phi_s \bigg| \leq \big| \mathcal{E}^\phi_{\geq \overline{r}}(P) + \wulff^\phi_{s} \big| = \frac{|P|}{\overline{r}^{\adim}}s^{\adim},  
    \end{displaymath}
    which means that $ k \leq \frac{| P |}{\overline{r}^{\adim}| \wulff^\phi_1 |} < N $. In particular $ \mathcal{E}^\phi_{\geq \overline{r}}(P) $ contains finitely many points and, choosing $ s < \overline{r} $ so that 
    \begin{displaymath}
        0 < s < \inf\bigg\{\frac{\polar{\phi}(x- y)}{2} : x, y \in \mathcal{E}^\phi_{\geq \overline{r}}(P), \; x \neq y \bigg\} 
    \end{displaymath}
    we infer from \eqref{thm compactness volume 3} that 
    \begin{displaymath}
        \frac{|P|}{\overline{r}^{\adim}}s^{\adim} = \big| \mathcal{E}^\phi_{\geq \overline{r}}(P) + \wulff^\phi_s\big| =  {\rm card}\big(\mathcal{E}^\phi_{\geq \overline{r}}(P) \big)\, | \wulff^\phi_1 |\, s^{\adim} 
    \end{displaymath}
    and we conclude that 
    \begin{displaymath}
        {\rm card}\big(\mathcal{E}^\phi_{\geq \overline{r}}(P) \big) = \frac{|P|}{\overline{r}^{\adim}\, \big| \wulff^\phi_1 \big|}  \geq N-1.
    \end{displaymath}
Henceforth, $  {\rm card}\big(\mathcal{E}^\phi_{\geq \overline{r}}(P) \big) = N-1 $. 

Define $$ C = \mathcal{E}^\phi_{\geq \overline{r}}(P) \quad \textrm{and} \quad  E = \mathcal{E}^\phi_{\geq \overline{r}}(P) + {\rm int}\,\wulff^\phi_{\overline{r}} = \bigcup_{x \in C} \bigl( x + {\rm int}\, \wulff^\phi_{\overline{r}}\bigr). $$
By \eqref{thm compactness volume 6}  and \eqref{thm compactness volume 7} we infer that
$$ (N-1)\bigl| \wulff^\phi_{\overline{r}} \bigr| \leq | P | = | E |  \leq  (N-1)\bigl| \wulff^\phi_{\overline{r}} \bigr|, $$
henceforth $ ( x + {\rm int}\, \wulff^\phi_{\overline{r}}) \cap ( y + {\rm int}\, \wulff^\phi_{\overline{r}}) = \varnothing $ whenever $ x , y \in \mathcal{E}^\phi_{\geq \overline{r}}(P) $ and $ x \neq y $. Since $ E \subseteq P $ we see that $ | E \symdiff P | =0 $ and consequently, $ | \Omega_h \symdiff E | \to 0 $ as $ h \to \infty $. Finally, employing \eqref{thm compactness volume 8}  and \eqref{eq basic wulff shapes 2} we see that 
\begin{flalign*}
\lim_{h \to \infty}\mathcal{P}_{\phi_h}(\Omega_h) =  (n+1)\, \overline{r}^n\, {\rm card}(C)\,| \wulff^\phi_1 | = {\rm card}(C)\,\mathcal{P}_\phi\bigl(\wulff^\phi_{\overline{r}}\bigr).
\end{flalign*}
\end{proof}

\begin{remark}\label{rmk compactness 1}
Since Theorem \ref{thm compactness} deals with sequences of sets that are not assumed to be smooth, this result is new even if $ \phi_h = \phi = \text{Euclidean norm} $. We remark that if $ \phi $ is the Euclidean norm then \eqref{regularity theorem hp1} of Theorem \ref{regularity theorem} follows from \eqref{regularity theorem hp2}, as one can see using \cite[8.6]{Allard1972}.  
\end{remark}

\begin{remark}\label{rmk compactness 2}
	Suppose $ \phi $ is a uniformly convex $ \cnt{3} $-norm. If $ \phi_h = \phi $ for every $ h \geq 1 $, then we notice that \cite[Corollary 1.2]{deRosaKolaSantilli} is a special case of Theorem \ref{thm compactness}.
\end{remark}

\begin{remark}\label{rmk compactness 3}
Prior to our result, bubbling phenomena for approximating sequences of sets of finite perimeter were studied in \cite[Corollary 1.2]{delgadinomaggi} in the Euclidean setting, and in \cite{MaggiSantilli} in a general class of warped product spaces, under the additional assumption that the perimeter is preserved in the limit. This hypothesis is essential in these results, as they are deduced as corollaries of uniqueness theorems for volume-constrained critical points of perimeter functionals. On the other hand, Theorem \ref{thm compactness} do not assume preservation of perimeter in the limit, which is a key novelty in this direction.
\end{remark}

{\small\subsection*{Acknowledgements}
 This article is published with funding from Università degli Studi dell'Aquila under the Call for Proposals for Fundamental research and Early-career research grants - Year 2026.
 This research has been partially supported by INDAM-GNSAGA and  PRIN project 20225J97H5.}


\bigskip

{\small \noindent
  Mario Santilli \\
  Department of Information Engineering, Computer Science and Mathematics,\\
  Università degli Studi dell'Aquila\\
  via Vetoio 1, 67100 L’Aquila, Italy\\
  \texttt{mario.santilli@univaq.it}
}
\end{document}